\renewcommand\labelenumi{\textup{(\arabic{enumi})}}
\renewcommand\theenumi\labelenumi
\numberwithin{equation}{section}
\def\@makefnmark{\hbox{(\@textsuperscript{\normalfont\@thefnmark})}}
\providecommand{\ack}[1]{\par\addvspace\baselineskip
\noindent\ackname\enspace\ignorespaces#1}%
\def\subjclassname{\textup{2020} \textit{Mathematics Subject Classification:}}%
\providecommand{\subjclass}[1]{\par\addvspace\baselineskip
\noindent\subjclassname\enspace\ignorespaces#1}%
\newcommand{\E}{{\mathbb{E}}}
\newcommand{\N}{{\mathbb{N}}}
\def\P{{\mathbb{P}}}
\newcommand{\R}{{\mathbb{R}}}
\newcommand{\Z}{{\mathbb{Z}}}
\newcommand{\cA}{{\mathcal{A}}}
\newcommand{\cB}{{\mathcal{B}}}
\newcommand{\cC}{{\mathcal{C}}}
\newcommand{\cE}{{\mathcal{E}}}
\newcommand{\cF}{{\mathcal{F}}}
\newcommand{\cK}{{\mathcal{K}}}
\newcommand{\cN}{{\mathcal{N}}}
\newcommand{\cP}{{\mathcal{P}}}
\newcommand{\cU}{{\mathcal{U}}}
\def\a{\alpha}
\def\b{\beta}
\newcommand{\gm}{\gamma}
\newcommand{\dl}{\delta}
\newcommand{\sg}{\sigma}
\newcommand{\zt}{\zeta}
\newcommand{\lm}{\lambda}
\newcommand{\ph}{\varphi}
\newcommand{\om}{\omega}
\newcommand{\Sg}{\Sigma}
\newcommand{\Om}{\Omega}
\newcommand{\SG}{\mathrm{SG}}
\newcommand{\la}{\langle}
\newcommand{\ra}{\rangle}
\newcommand{\bfone}{\mathbf{1}}
\newcommand{\bfnu}{\boldsymbol{\nu}}
\begin{document}
\mainmatter
\title{\bfseries Singularity of energy measures on a class of inhomogeneous Sierpinski gaskets}
\titlerunning{Singularity of energy measures}

\author{Masanori Hino\inst{1} \and Madoka Yasui\inst{2}}
\authorrunning{Masanori Hino and Madoka Yasui}
\tocauthor{Masanori Hino and Madoka Yasui}
\institute{Department of Mathematics, Kyoto University, Kyoto 606-8502, Japan\\
\email{hino@math.kyoto-u.ac.jp}\medskip\and
Katsushika-ku, Tokyo, Japan}%\\
%\email{}}

\maketitle

\begin{abstract}
We study energy measures of canonical Dirichlet forms on inhomogeneous Sierpinski gaskets. We prove that the energy measures and suitable reference measures are mutually singular under mild assumptions.

\keywords{fractal, energy measure, Dirichlet form} 
\subjclass{Primary: 28A80, Secondary: 31C25, 60G30, 60J60}
\ack{This study was supported by JSPS KAKENHI Grant Numbers JP19H00643 and JP19K21833.}
\end{abstract}

\bigskip\noindent
\section{Introduction}
Energy measures associated with regular Dirichlet forms are fundamental concepts in stochastic analysis and related fields. For example, the intrinsic metric is defined by using energy measures and appears in Gaussian estimates of the transition probabilities. Energy measures are also crucial for describing the conditions for sub-Gaussian behaviors of transition densities. The energy measures are expected to be singular with respect to (canonical) underlying  measures for canonical Dirichlet forms on self-similar fractals, which has been confirmed in many cases~\cite{Kus89,BST99,Hi05,HN06}. Recently, such a singularity was proved under full off-diagonal sub-Gaussian estimates of the transition densities~\cite{KM20}.

In this paper, we study a class of inhomogeneous Sierpinski gaskets as examples that have not yet been covered in the previous studies: they do not necessarily have strict self-similar structures or nice sub-Gaussian estimates. We show that the singularity of the energy measures still holds under mild assumptions. The strategy of our proof is based on quantitative estimates of probability measures on shift spaces, the techniques of which were used in \cite{Hi05,HN06}. We expect this study to lead to further progress in stochastic analysis of complicated spaces of this kind.

This paper is organized as follows: In Section~2, we introduce a class of inhomogeneous Sierpinski gaskets and canonical Dirichlet forms defined on them, and state the main results. In Sections~3 and 4, we provide preliminary lemmas and prove the theorems. In Section~5, we make some concluding remarks.

\section{Framework and statement of theorems}
We begin by recalling $2$-dimensional level-$\nu$ Sierpinski gaskets $\SG(\nu)$ for $\nu\ge2$. 
Let $N(\nu)=\nu(\nu+1)/2$.
Let $\tilde K$ be an equilateral triangle in $\R^2$ including the interior. Let $K_i^{(\nu)}\subset \tilde K$, $i=1,2,\dots,N(\nu)$, be equilateral triangles including the interior that are obtained by dividing the sides of $\tilde K$ in $\nu$, joining these points, and removing all the downward-pointing triangles, as in Figure~\ref{fig:1}.
\begin{figure}[t]
\centering
\includegraphics[width=\hsize]{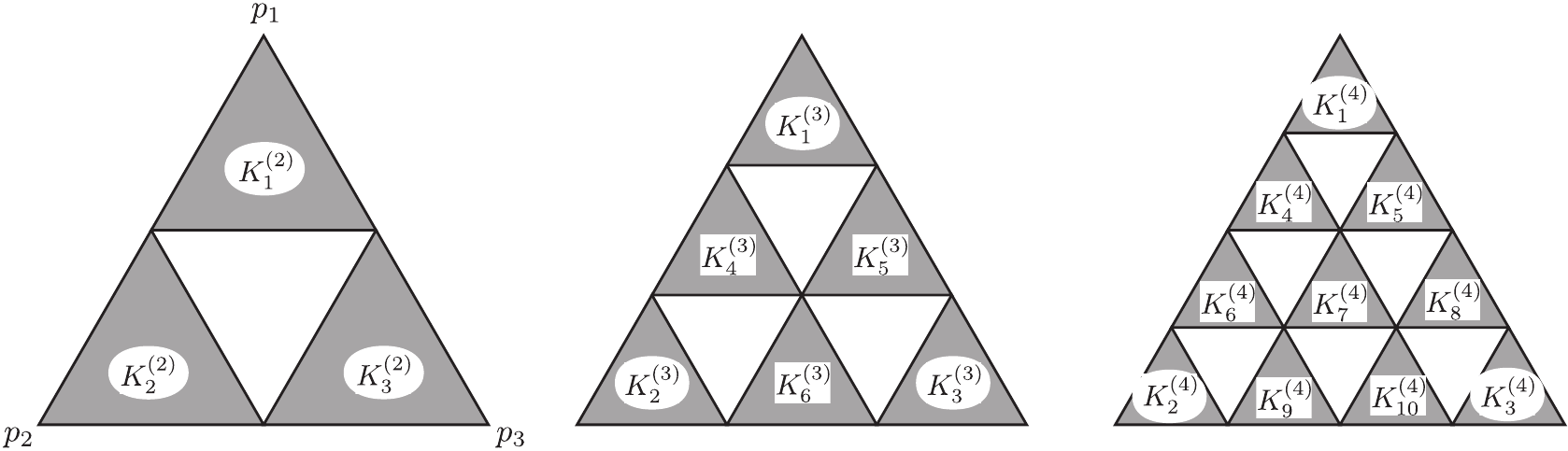}
\caption{$K_i^{(\nu)}$, the image of $\tilde K$ by the contractive affine map $\psi_i^{(\nu)}$ $(\nu=2,3,4)$.}
\label{fig:1}
\end{figure}
Let $\psi_i^{(\nu)}$, $i=1,2,\dots,N(\nu)$, be the contractive affine map from $\tilde K$ onto $K_i^{(\nu)}$ of type $\psi_i^{(\nu)}(z)=\nu^{-1}z+\a_i^{(\nu)}$ for some $\a_i^{(\nu)}\in \R^2$. 
Then, the $2$-dimensional level-$\nu$ Sierpinski gasket $\SG(\nu)$ is defined as a unique non-empty compact subset $K$ in $\tilde K$ such that
\[
K=\bigcup_{i=1}^{N(\nu)}\psi_i^{(\nu)}(K).
\]
Let $S_0=\{1,2,3\}$, and let $V_0=\{p_1,p_2,p_3\}$ be the set of all vertices of $\tilde K$.
In the definition of $\SG(\nu)$, the labeling of $K_i^{(\nu)}$ does not matter.
For later convenience, we assign $K_i^{(\nu)}$ for $i\in S_0$ to the triangle that contains $p_i$. As a result, $\psi_i^{(\nu)}$ has a fixed point $p_i$.

For a general non-empty set $X$, denote by $l(X)$ the set of all real-valued functions on $X$. When $X$ is finite, the inner product $(\cdot,\cdot)$ on $l(X)$ is defined by
\[
  (x,y)=\sum_{p\in X} x(p)y(p), \quad x,y\in l(X).
\]
We regard $l(X)$ as the $L^2$-space on $X$ equipped with the counting measure. Then, the $L^2$-inner product is identical with $(\cdot,\cdot)$. The induced norm is denoted by $|\mathrel{\cdot}|$.

A symmetric linear operator $D=(D_{p,q})_{p,q\in V_0}$ on $l(V_0)$ is defined as
\[
D_{p,q}=\begin{cases}
-2 & \text{if }p=q,\\
1 & \text{otherwise}.
\end{cases}
\]
Let 
\[
Q(x,y):=(-Dx,y)=-\sum_{p,q\in V_0}D_{p,q}x(q)y(p)\]
for $x,y\in l(V_0)$.
More explicitly,
\begin{align*}
Q(x,y)&=(x(p_1)-x(p_2))(y(p_1)-y(p_2))
+(x(p_2)-x(p_3))(y(p_2)-y(p_3))\\
&\quad +(x(p_3)-x(p_1))(y(p_3)-y(p_1)).
\end{align*}
This is a Dirichlet form on $l(V_0)$.
To simplify the notation, we sometimes write $Q(x)$ for $Q(x,x)$.

Let
\[
 V_1^{(\nu)}=\bigcup_{i=1}^{N(\nu)}\psi_i^{(\nu)}(V_0).
\]
Let $r^{(\nu)}>0$ and $Q^{(\nu)}$ be a symmetric bilinear form on $V_1^{(\nu)}$ that is defined by
\[
Q^{(\nu)}(x,y)=\sum_{i=1}^{N(\nu)}\frac1{r^{(\nu)}} Q(x\circ \psi_i^{(\nu)}|_{V_0},y\circ \psi_i^{(\nu)}|_{V_0}),\quad
x,y\in l(V_1^{(\nu)}).
\]
Then, there exists a unique $r^{(\nu)}>0$ such that, for every $x\in l(V_0)$,
\begin{equation}\label{eq:consistency}
Q(x,x)=\inf\bigl\{Q^{(\nu)}(y,y)\bigm| y\in l(V_1^{(\nu)})\text{ and }y|_{V_0}=x\bigr\}.
\end{equation}
Hereafter, we fix such $r^{(\nu)}$.
For example, $r^{(2)}=3/5$, $r^{(3)}=7/15$, and $r^{(4)}=41/103$, which are confirmed by the concrete calculation.

For each $x\in l(V_0)$, there exists a unique $y\in l(V_1)$ that attains the infimum in \eqref{eq:consistency}. For $i=1,2,\dots,N(\nu)$, the map $l(V_0)\ni x\mapsto y\circ \psi_i^{(\nu)}|_{V_0}\in l(V_0)$ is linear, which is denoted by $A_i^{(\nu)}$. Then, it holds that
\begin{equation}\label{eq:consistency2}
Q(x,x)=\sum_{i=1}^{N(\nu)}\frac1{r^{(\nu)}} Q(A_i^{(\nu)}x,A_i^{(\nu)}x),\quad x\in l(V_0).
\end{equation}
We can construct a Dirichlet form on $\SG(\nu)$ by using such data, but we omit the explanation because we discuss it in more general situations soon.

For reference, we give a quantitative estimate of $r^{(\nu)}$.
\begin{lemma}\label{lem:r}
$1/\nu<r^{(\nu)}<N(\nu)/\nu^2$.
\end{lemma}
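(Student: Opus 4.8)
The plan is to rewrite $r^{(\nu)}$ by means of the variational identity \eqref{eq:consistency}, read it as an effective resistance on a finite graph, and then bound that resistance by elementary network surgery. Apply \eqref{eq:consistency} to the function $x\in l(V_0)$ with $x(p_1)=1$ and $x(p_2)=x(p_3)=0$, so that $Q(x,x)=2$. Let $\mathcal G^{(\nu)}$ be the finite graph with vertex set $V_1^{(\nu)}$ in which two vertices are joined by a unit-conductance edge whenever they lie in a common cell $\psi_i^{(\nu)}(\tilde K)$; since no edge of $\mathcal G^{(\nu)}$ is shared by two cells (a feature of the gasket structure), the sum $\sum_{i=1}^{N(\nu)}Q(z\circ\psi_i^{(\nu)}|_{V_0},z\circ\psi_i^{(\nu)}|_{V_0})$ equals the graph energy $\sum_{\{a,b\}}(z(a)-z(b))^2$ of $z\in l(V_1^{(\nu)})$. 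Thus \eqref{eq:consistency} for this $x$ becomes $2r^{(\nu)}=C^{(\nu)}$, where $C^{(\nu)}$ is the effective conductance of $\mathcal G^{(\nu)}$ between $p_1$ and $\{p_2,p_3\}$; equivalently $r^{(\nu)}=1/(2R^{(\nu)})$ for the corresponding effective resistance $R^{(\nu)}$. Since $N(\nu)/\nu^2=(\nu+1)/(2\nu)$, the lemma is equivalent to the two-sided bound $\nu/(\nu+1)<R^{(\nu)}<\nu/2$.

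For $R^{(\nu)}<\nu/2$ (equivalently $r^{(\nu)}>1/\nu$) I would use the chain $p_1=v_0,v_1,\dots,v_\nu=p_2$ of consecutive vertices along the side $[p_1,p_2]$, and likewise $p_1=w_0,\dots,w_\nu=p_3$ along $[p_1,p_3]$; each consecutive pair lies in a common cell, so these are two edge-disjoint paths of length $\nu$ in $\mathcal G^{(\nu)}$ from $p_1$ to the shorted set $\{p_2,p_3\}$, and Rayleigh monotonicity gives $R^{(\nu)}\le\nu/2$. Strictness I would get from Thomson's principle: the unit flow carrying current $1/2$ along each of the two paths has energy $\nu/2$, yet it fails the cycle condition, because around the cell $\psi_2^{(\nu)}(\tilde K)$ — whose vertices are $p_2$, $v_{\nu-1}$, and the neighbour $z_1$ of $p_2$ on $[p_2,p_3]$ in $V_1^{(\nu)}$ — the flow equals $1/2$ on $\{v_{\nu-1},p_2\}$ and $0$ on the other two edges, so its circulation is $1/2\neq0$. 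Hence $R^{(\nu)}<\nu/2$. (One can equally argue on the potential side: the minimizer would be affine along the two chains and constant off them, contradicting the mean-value property it must satisfy, e.g.\ at $v_{\nu-1}$.)

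For $R^{(\nu)}>\nu/(\nu+1)$ (equivalently $r^{(\nu)}<N(\nu)/\nu^2$) I would stratify $V_1^{(\nu)}$ into the $\nu+1$ horizontal levels $L_0=\{p_1\},L_1,\dots,L_\nu=[p_2,p_3]\cap V_1^{(\nu)}$, with $L_d$ having $d+1$ elements. Every edge of $\mathcal G^{(\nu)}$ lies within one level or between two consecutive levels, and a count of cells shows that exactly $2d$ edges run between $L_{d-1}$ and $L_d$. Shorting each $L_d$ to a point turns $\mathcal G^{(\nu)}$ into a chain of conductances $2,4,\dots,2\nu$ in series, so $R^{(\nu)}\ge\sum_{d=1}^\nu 1/(2d)$, and an immediate induction gives $\sum_{d=1}^\nu 1/d>2\nu/(\nu+1)$ for $\nu\ge2$, whence $R^{(\nu)}>\nu/(\nu+1)$. (Alternatively, the Dirichlet principle applied to the test potential equal to $1-d/\nu$ on $L_d$ — which has increment $0$ on within-level edges and $1/\nu$ on the remaining $2N(\nu)$ edges — yields $C^{(\nu)}\le 2N(\nu)/\nu^2$ directly, with strict inequality since that potential is not harmonic on $\mathcal G^{(\nu)}$.)

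The estimates themselves are short once the network picture is in place; the part that needs care is the combinatorial bookkeeping for $\mathcal G^{(\nu)}$ — enumerating the cells, verifying that no edge lies in two cells, locating the two boundary chains and the corner cells, and counting the $2d$ edges across consecutive levels — together with turning the two strictness statements into fully rigorous arguments. I expect this bookkeeping to be the only real obstacle.
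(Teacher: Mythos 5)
Your proof is correct, and it takes a genuinely different route from the paper's. The paper only proves the first inequality $1/\nu<r^{(\nu)}$ directly and cites the literature (\cite[Proposition~5.3]{KM20}, \cite[Proposition~6.30]{Ba98}) for $r^{(\nu)}<N(\nu)/\nu^2$; its direct argument works with the boundary data $z(p_1)=1$, $z(p_2)=0$ (leaving $p_3$ free, with the associated constant $\a=3/2$), applies Cauchy--Schwarz to the telescoping increments over the $\nu$ cells meeting the side $[p_1,p_2]$, and obtains strictness from the maximum principle applied to the corner cell $\psi_3^{(\nu)}$. You instead specialize \eqref{eq:consistency} to $x(p_1)=1$, $x(p_2)=x(p_3)=0$, identify $2r^{(\nu)}$ with the effective conductance of the level-one network between $p_1$ and the shorted pair $\{p_2,p_3\}$, and then get the upper bound on the resistance from two edge-disjoint boundary chains (with strictness via Thomson's principle, since the equal-split flow violates the cycle law at the corner cell) and the lower bound from shorting the $\nu+1$ horizontal levels (or, dually, from the affine-in-levels test potential in Dirichlet's principle). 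Your combinatorial claims all check out: distinct cells share at most one vertex, so no edge lies in two cells; each of the $d$ cells with apex in level $d-1$ contributes exactly two crossing edges, giving $2d$ between consecutive levels; and $\sum_{d=1}^{\nu}1/d>2\nu/(\nu+1)$ for $\nu\ge2$ follows from $H_\nu\ge 1+(\nu-1)/\nu$. What your approach buys is a self-contained proof of \emph{both} inequalities by the same mechanism, with the two strictness statements (failed cycle law; non-harmonic test potential) playing the role that the maximum principle plays in the paper; what it costs is precisely the network bookkeeping you flag, which is routine but must be done. The paper's Cauchy--Schwarz step is essentially your level-shorting argument in disguise, applied to a single chain of cells rather than to the full level decomposition.
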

\begin{proof}
This kind of inequality should be well-known (see, e.g., \cite[Theorem~1]{Ba04}), and see the proof of \cite[Proposition~5.3]{KM20} (and also \cite[Proposition~6.30]{Ba98}) for the second inequality. For the first inequality, let 
\begin{equation}\label{eq:alpha}
\a=\inf\{Q(z,z)\mid z\in l(V_0),\ z(p_1)=1,\ z(p_2)=0\}>0.
\end{equation}
Then, for general $z\in l(V_0)$,
\begin{equation}\label{eq:lower}
Q(z,z)\ge (z(p_1)-z(p_2))^2 \a
\end{equation}
by considering $(z-z(p_2))/(z(p_1)-z(p_2))$.

The infimum of \eqref{eq:alpha} is attained by $x\in l(V_0)$ given by $x(p_1)=1$, $x(p_2)=0$, $x(p_3)=1/2$ (and $\a=3/2$).
Take $y\in l(V_1^{(\nu)})$ attaining the infimum of \eqref{eq:consistency}.
Let $I\subset\{1,2,\dots,N(\nu)\}$ be a $\nu$-points set such that, for each $i\in I$, the intersection of $\psi_i^{(\nu)}(V_0)$ and the segment connecting $p_1$ and $p_2$ is a two-points set, say $\{\check p_i,\hat p_i\}$. Note that $3\notin I$, and $y$ is not constant on $\psi_3^{(\nu)}(V_0)$, which is confirmed by applying the maximum principle (see, e.g., \cite[Proposition~2.1.7]{Ki}) to the graph whose vertices are all points of $V_1^{(\nu)}$ included in the triangle with $p_1$, $p_3$ and the middle point of $p_1$ and $p_2$ as the three vertices. Therefore,
\begin{align*}
\a&=Q^{(\nu)}(y,y)\\
&>\sum_{i\in I}\frac1{r^{(\nu)}} Q(y\circ \psi_i^{(\nu)}|_{V_0},y\circ \psi_i^{(\nu)}|_{V_0})\\
&\ge\frac1{r^{(\nu)}}\sum_{i\in I}(y(\check p_i)-y(\hat p_i))^2\a\quad\text{(from \eqref{eq:lower})}\\
&\ge\frac{\a}{r^{(\nu)}}\Biggl(\sum_{i\in I}(y(\check p_i)-y(\hat p_i))\Biggr)^2\Biggl(\sum_{i\in I}1\Biggr)^{-1}\\
&=\frac{\a}{r^{(\nu)}}\cdot1\cdot\nu^{-1}.
\end{align*}
Thus, $1/\nu<r^{(\nu)}$.
\qed
\end{proof}
See also \cite{HK02} for the asymptotic behavior of $r^{(\nu)}$ as $\nu\to\infty$.

We now introduce $2$-dimensional inhomogeneous Sierpinski gaskets. 
We fix a non-empty finite subset $T$ of $\{\nu\in\N\mid \nu\ge 2\}$. For each $\nu\in T$, let $S^{(\nu)}$ denote the set of the letters $i^\nu$ for $i=1,2,\dots,N(\nu)$.
We set $S=\bigcup_{\nu\in T} S^{(\nu)}$ and $\Sg= S^\N$. For example, if $T=\{2,3\}$, then
\begin{align*}
S^{(2)}&=\{1^2,2^2,3^2\},\quad S^{(3)}=\{1^3,2^3,3^3,4^3,5^3,6^3\},
\end{align*}
and $S=S^{(2)}\cup S^{(3)}$ has nine elements.
(Note that $i^\nu$ \emph{does not mean} $\underbrace{ii\cdots i}_{\nu}$, the $\nu$-letter word consisting of only $i$, in this paper.)

For each $v\in S$, a shift operator $\sg_v\colon \Sg\to\Sg$ is defined by $\sg_v(\om_1\om_2\cdots)=v\om_1\om_2\cdots$.
Let $W_0=\{\emptyset\}$ and $W_m=S^m$ for $m\in\N$, and define $W_{\le n}=\bigcup_{m=0}^n W_m$ and $W_*=\bigcup_{m\in\Z_+} W_m$. 
Here, $\Z_+:=\N\cup\{0\}$.
For $w\in W_m$, $|w|$ represents $m$ and is called the length of $w$.
For $w=w_1\cdots w_m\in W_m$ and $w'=w'_1\cdots w'_n\in W_n$, $ww'\in W_{m+n}$ denotes $w_1\cdots w_m w'_1\cdots w'_n$. Also, $\sg_w\colon \Sg\to\Sg$ is defined as $\sg_w=\sg_{w_1}\circ\cdots\circ\sg_{w_m}$, and let $\Sg_w=\sg_w(\Sg)$.
For $k\le m$, $[w]_k$ denotes $w_1\cdots w_k\in W_k$. 
Similarly, for $\om=\om_1\om_2\cdots\in\Sg$ and $n\in\N$, let $[\om]_n$ denote $\om_1\cdots\om_n\in W_n$.
By convention, $\sg_\emptyset\colon\Sg\to\Sg$ is the identity map, $[w]_0:=\emptyset\in W_0$ for $w\in W_*$, and $[\om]_0:=\emptyset\in W_0$ for $\om\in\Sg$.

For $i^\nu\in S$, we define $\psi_{i^\nu}:=\psi_i^{(\nu)}$ and $A_{i^\nu}:=A_i^{(\nu)}$.
For $w=w_1w_2\cdots w_m\in W_m$, $\psi_w$ denotes $\psi_{w_1}\circ\psi_{w_2}\circ\dots\circ\psi_{w_m}$ and $A_w$ denotes $A_{w_m}\cdots A_{w_2}A_{w_1}$. Here, $\psi_\emptyset$ and $A_\emptyset$ are the identity maps by definition.
For $\om\in \Sg$, $\bigcap_{m\in\Z_+}\psi_{[\om]_m}(\tilde K)$ is a one-point set $\{p\}$. The map $\Sg\ni \om\mapsto p\in \tilde K$ is denoted by $\pi$. The relation $\psi_{v}\circ\pi =\pi\circ \sg_v$ holds for $v\in S$.

Now, we fix $L=\{L_w\}_{w\in W_*}\in T^{W_*}$. That is, we assign each $w\in W_*$ to $L_w\in T$.
We set $\tilde W_0=\{\emptyset\}$ and
\[
\tilde W_m=\bigcup_{w\in\tilde W_{m-1}}\bigl\{w v\bigm|v\in S^{(L_w)}\bigr\}
\]
for $m\in\N$, inductively.
Define $\tilde W_*=\bigcup_{m\in\Z_+} \tilde W_m\subset W_*$, $\tilde\Sg=\{\om\in\Sg\mid [\om]_m\in\tilde W_m\text{ for all }m\in\Z_+\}$ and $G(L)=\pi(\tilde\Sg)$. It holds that
\[
G(L)=\bigcap_{m\in\Z_+}\bigcup_{w\in\tilde W_m}\psi_w(\tilde K).
\]
We call $G(L)$ an inhomogeneous Sierpinski gasket generated by $L$. See Figure~\ref{fig:2} for a few examples.
We equip $G(L)$ with the relative topology of $\R^2$.
\begin{figure}[t]
\centering
\includegraphics[width=0.46\hsize]{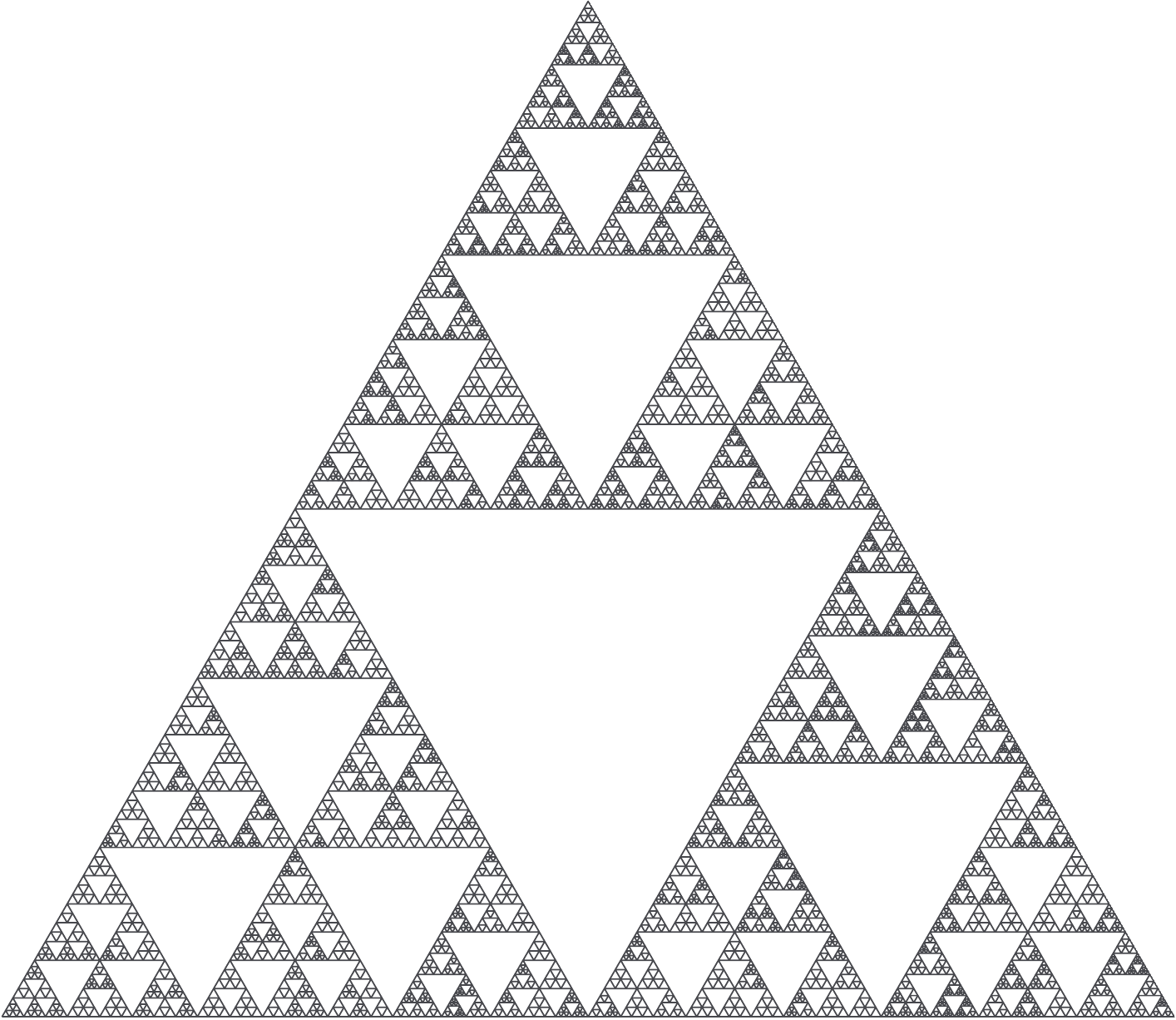}\qquad
\includegraphics[width=0.46\hsize]{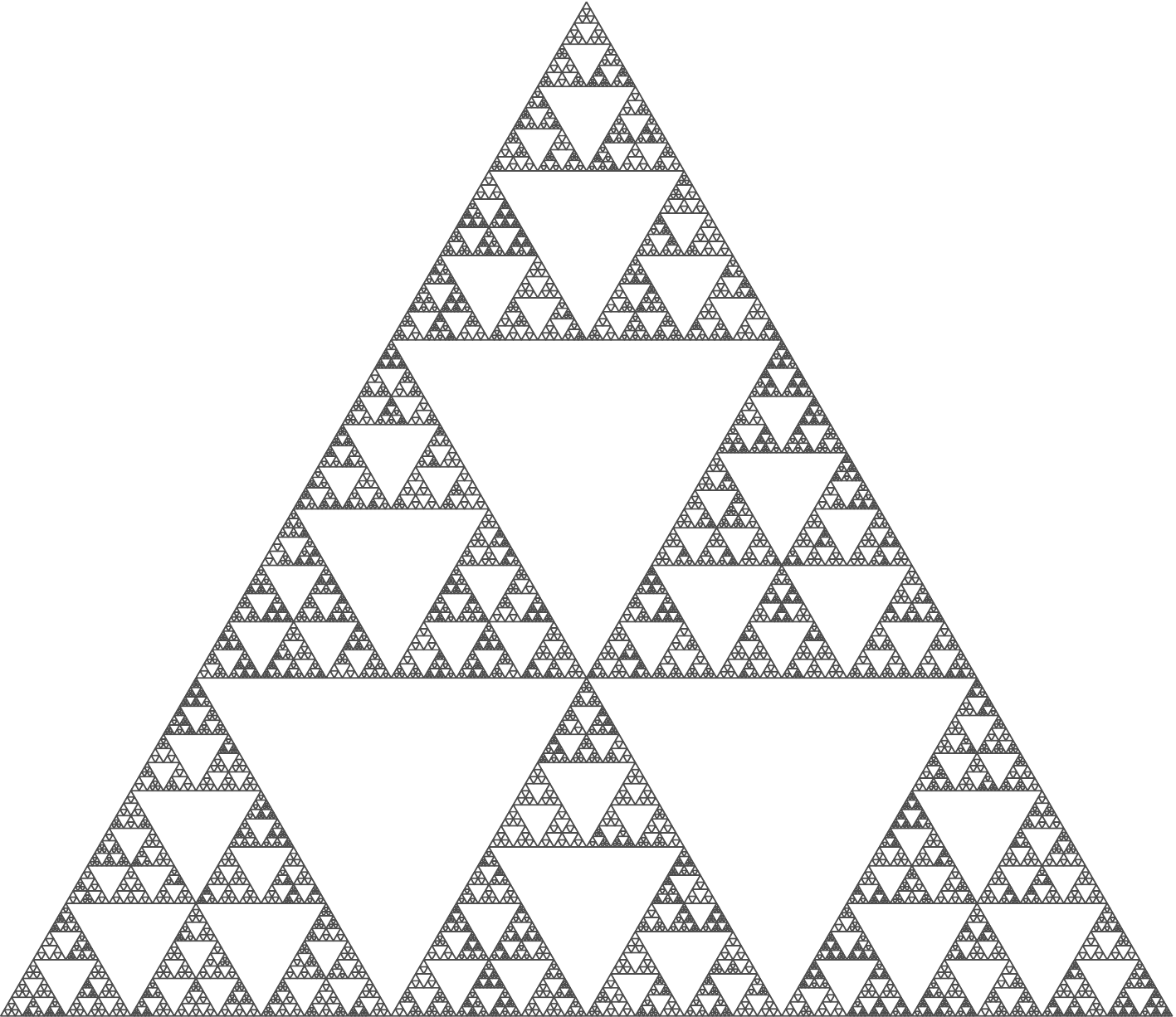}
\caption{Examples of inhomogeneous Sierpinski gaskets $(T=\{2,3\})$.}
\label{fig:2}
\end{figure}
If $L_w=\nu$ for all $w\in W_*$, then $G(L)$ is nothing but $\SG(\nu)$.

For $m\in\N$, let
\[
V_m=\bigcup_{w\in\tilde W_{m}}\psi_w(V_0),
\]
and let $V_*=\bigcup_{m\in\Z_+}V_m$.
The closure of $V_*$ is equal to $G(L)$.

Next, we define reference measures on $G(L)$.
Let
\[
\cA^{(\nu)}=\Biggl\{q=\{q_v\}_{v\in S^{(\nu)}}\Biggm|
q_v>0\text{ for all $v\in S^{(\nu)}$ and }\sum_{v\in S^{(\nu)}}q_v=1\Biggr\}
\]
and
\[
\cA=\Bigl\{q=\{q_v\}_{v\in S}\Bigm| \text{for each $\nu\in T$, }\{q_v\}_{v\in S^{(\nu)}}\in\cA^{(\nu)}\Bigr\}.
\]
For $q\in\cA$, there exists a unique Borel probability measure $\lm_q$ on $\Sg$ such that
\[
  \lm_q(\Sg_w)=\begin{cases}
  q_{w_1}\cdots q_{w_m} & \text{if }w=w_1\cdots w_m\in \tilde W_m,\\
  0 & \text{if }w\notin \tilde W_*.
  \end{cases}
\]
We note that
\[
\lm_q(\Sg\setminus \tilde\Sg)=\lim_{m\to\infty}\lm_q\Biggl(\Sg\setminus \bigcup_{w\in\tilde W_m}\Sg_w\Biggr)=0.
\]
In what follows, $q_w$ denotes $q_{w_1}\cdots q_{w_m}$ for $w=w_1\cdots w_m\in W_m$. By definition, $q_\emptyset=1$.
The Borel probability measure $\mu_q$ on $G(L)$ is defined by $\mu_q=(\pi|_{\tilde\Sg})_* \lm_q$, that is, the image measure of $\lm_q$ by $\pi|_{\tilde\Sg}\colon\tilde\Sg\to G(L)$. It is easy to see that $\mu_q$ has full support and does not charge any one points.
When $T=\{\nu\}$, $\mu_q$ is a self-similar measure on $G(L)=\SG(\nu)$.

We next construct a Dirichlet form on $G(L)$. 
Let $r_{i^\nu}=r^{(\nu)}$ for $i^\nu\in S$, and $r_w=r_{w_1}\cdots r_{w_m}$ for $w=w_1\cdots w_m\in W_m$. By definition, $r_\emptyset=1$.
For $m\in\Z_+$, let
\[
\cE^{(m)}(x,y)=\sum_{w\in\tilde W_m}\frac1{r_w} Q(x\circ\psi_w|_{V_0},y\circ\psi_w|_{V_0}),\quad
x,y\in l(V_m).
\]
From \eqref{eq:consistency} and \eqref{eq:consistency2}, it holds that for every $m\in\Z_+$ and $x\in l(V_m)$,
\[
\cE^{(m)}(x,x)=\inf\{\cE^{(m+1)}(y,y)\mid y\in l(V_{m+1})\text{ and }y|_{V_m}=x\}.
\]
Thus, for any $x\in l(V_*)$, the sequence $\{\cE^{(m)}(x|_{V_m},x|_{V_m})\}_{m=0}^\infty$ is non-decreasing.
We define
\begin{align*}
\cF&=\left\{f\in C(G(L))\;\middle|\;\lim_{m\to\infty}\cE^{(m)}(f|_{V_m},f|_{V_m})<\infty\right\},\\
\cE(f,g)&=\lim_{m\to\infty}\cE^{(m)}(f|_{V_m},g|_{V_m}),\quad f,g\in\cF,
\end{align*}
where $C(G(L))$ denotes the set of all real-valued continuous functions on $G(L)$.
Then, $(\cE,\cF)$ is a resistance form and also a strongly local regular Dirichlet form on $L^2(G(L),\mu_q)$ for any $q\in\cA$ (see \cite{Ha97} and \cite[Chapter~2]{Ki}). Here, $C(G(L))$ is regarded as a subspace of $L^2(G(L),\mu_q)$. We equip $\cF$ with the inner product $(f,g)_\cF:=\cE(f,g)+\int_{G(L)}fg\,d\mu_q$ as usual.

The energy measure $\mu_{\la f\ra}$ of $f\in \cF$ is a finite Borel measure on $G(L)$, which is characterized by
\[
\int_{G(L)} g\,d\mu_{\la f\ra}=2\cE(f,fg)-\cE(f^2,g),
\quad g\in\cF.
\]
By letting $g\equiv1$, the total mass of $\mu_{\la f\ra}$ is $2\cE(f,f)$.
Another expression of $\mu_{\la f\ra}$ is discussed in Section~3.

We introduce the following conditions for $q=\{q_v\}_{v\in S}\in\cA$ to describe our main theorem.
\begin{enumerate}
\item[\textup(A)] $q_{i^\nu}\ne r^{(\nu)}$ for all $i\in S_0$ and $\nu\in T$.
\item[\textup(B)] For each $l_0,l_1\in\N$, there exists $l_2\in\N$ such that the following~\textup{($\star$)} holds for $\mu_q$-a.e.\,$\om\in \Sg$:
\begin{itemize}
\item[\textup{($\star$)}] there exist infinitely many $k\in\Z_+$ such that, for every $i,j\in S_0$, 
\begin{align}
&[\om]_k i^{\nu_{k+1}}\cdots i^{\nu_{k+l_0}}j^{\nu_{k+l_0+1}}\cdots j^{\nu_{k+l_0+l_1}}j^{\nu_{k+l_0+l_1+1}}\cdots j^{\nu_{k+l_0+l_1+l_2}}\nonumber\\
&\in \tilde W_{k+l_0+l_1+l_2}
\label{eq:A1}
\end{align}
implies that
\begin{align}
&\{\nu_{m}\in T\mid k+l_0+1\le m\le k+l_0+l_1\}\nonumber\\
&\subset \{\nu_{m}\in T\mid k+l_0+l_1+1\le m\le k+l_0+l_1+l_2\}.\label{eq:A2}
\end{align}
\end{itemize}
\end{enumerate}
\begin{remark}
\begin{enumerate}
\item Condition~($\star$) is meaningful only for $\om\in\tilde\Sg$.
\item For $\om\in\tilde\Sg$, $k\in\Z_+$, and $i,j\in S_0$, the elements $\nu_{k+1},\nu_{k+2},\dots,\nu_{k+l_0+l_1+l_2}\in T$ so that \eqref{eq:A1} holds are uniquely determined.
Indeed, $\nu_{k+1}=L_{[\om]_k}$, $\nu_{k+2}=L_{[\om]_k i^{\nu_{k+1}}}$, $\nu_{k+3}=L_{[\om]_k i^{\nu_{k+1}}i^{\nu_{k+2}}}$, and so on.
\item A simple sufficient condition for \eqref{eq:A2} is
\begin{equation}\label{eq:A2'}
\{\nu_{m}\mid k+l_0+l_1+1\le m\le k+l_0+l_1+l_2\}=T.
\end{equation}
\end{enumerate}
\end{remark}
\begin{theorem}\label{th:main}
Let $q\in\cA$. Suppose that Condition~\textup{(A)} or \textup{(B)} holds.
Then, $\mu_{\la f\ra}$ and $\mu_q$ are mutually singular for every $f\in\cF$.
\end{theorem}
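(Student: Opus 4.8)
The plan is to transfer everything to the shift space $\Sg$, reduce the assertion to the vanishing of the limit of a density martingale, and then exploit the eigenstructure of the linear maps $A_{i^\nu}$ attached to the corner letters $i^\nu$ ($i\in S_0$) together with Condition~(A) or~(B). We may assume $f$ is non-constant. By the self-similarity of energy measures (Section~3) and the fact that $\mu_{\la f\ra}$ does not charge $V_*$, there is a finite Borel measure $\tilde\mu$ on $\tilde\Sg$ with $(\pi|_{\tilde\Sg})_*\tilde\mu=\mu_{\la f\ra}$ and $\tilde\mu(\Sg_w)=\mu_{\la f\ra}(\psi_w(\tilde K))$ for $w\in\tilde W_*$. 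Put $f_w:=f\circ\psi_w|_{V_0}\in l(V_0)$ and $e_w:=\frac{r_w}{2}\mu_{\la f\ra}(\psi_w(\tilde K))$, so that $\tilde\mu(\Sg_w)=2e_w/r_w$, $e_w=\sum_{v\in S^{(L_w)}}r_v^{-1}e_{wv}$, $e_{wv}\le r_v e_w$, and, by \eqref{eq:consistency}--\eqref{eq:consistency2} and the construction of $(\cE,\cF)$,
\[
 e_w\ge Q(f_w)\qquad\text{and}\qquad\sum_{v\in S^{(L_w)}}r_v^{-1}Q(f_{wv})\ge Q(f_w).
\]
On $(\Sg,\lm_q)$ with the filtration $(\cF_m)$ generated by the cylinders $\Sg_w$ ($w\in W_m$), the density process $X_m(\om):=\tilde\mu(\Sg_{[\om]_m})/\lm_q(\Sg_{[\om]_m})=2e_{[\om]_m}/(r_{[\om]_m}q_{[\om]_m})$ is a non-negative supermartingale, converges $\lm_q$-a.e.\ to some $X_\infty$, and, by the Lebesgue decomposition theorem for martingales, $\mu_{\la f\ra}\perp\mu_q$ if and only if $X_\infty=0$ $\lm_q$-a.e. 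In parallel, the second displayed inequality shows that $\Phi_m(\om):=1-Q(f_{[\om]_m})/e_{[\om]_m}\in[0,1]$ is a non-negative supermartingale on $(\Sg,\tilde\mu)$, and $\int_\Sg\Phi_m\,d\tilde\mu=2\bigl(\cE(f,f)-\cE^{(m)}(f|_{V_m},f|_{V_m})\bigr)\to0$; hence $Q(f_{[\om]_m})/e_{[\om]_m}\to1$ for $\tilde\mu$-a.e.\ $\om$.

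Now argue by contradiction: suppose $\mu_{\la f\ra}\not\perp\mu_q$, so $\tilde B:=\{X_\infty>0\}$ has $\lm_q(\tilde B)>0$ and $\lm_q\ll\tilde\mu$ on $\tilde B$. Then for $\lm_q$-a.e.\ $\om\in\tilde B$ we have both $Q(f_{[\om]_m})/e_{[\om]_m}\to1$ and $X_m(\om)\to X_\infty(\om)\in(0,\infty)$; from this one reads off that $f$ is eventually non-constant on the cells $[\om]_m$, that for every $\eps>0$ there is $M$ with $\bigl|\log(Q(f_{[\om]_{m'}})/Q(f_{[\om]_m}))-\sum_{k=m+1}^{m'}\log(r_{\om_k}q_{\om_k})\bigr|<\eps$ for $M\le m\le m'$, and that, writing $f_{[\om]_{m+1}}=A_{\om_{m+1}}f_{[\om]_m}+\xi_m$, the correction satisfies $Q(\xi_m)\le r_{\om_{m+1}}\bigl(e_{[\om]_m}-Q(f_{[\om]_m})\bigr)=o(Q(f_{[\om]_m}))$, i.e.\ along corner letters the direction of $f_{[\om]_m}$ in $l(V_0)/\R\bfone$ evolves essentially by the maps $A_{\om_m}$. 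The next input is the eigenstructure of the corner maps: for $i\in S_0$, the reflection of $\tilde K$ fixing $p_i$ commutes on $l(V_0)/\R\bfone$ with every $A_{i^\nu}$, so $A_{i^\nu}$ is diagonal in the two $Q$-orthogonal eigenlines $E_i^{+},E_i^{-}$ of that reflection, which do not depend on $\nu$, with eigenvalues $s_\nu,a_\nu$ satisfying $0<a_\nu<s_\nu<1$; a computation from \eqref{eq:consistency} relates these to $r^{(\nu)}$ — in particular $s_\nu=r^{(\nu)}$, whence $a_\nu^2/r^{(\nu)}\ne r^{(\nu)}$ — and $E_i^{+},E_j^{+}$ span $l(V_0)/\R\bfone$ when $i\ne j$. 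Combining these: if $\om_{m+1}=j^\nu$ is a corner letter, $m$ large, and $\rho_m$ denotes the ratio of the $E_j^{+}$-energy to the $E_j^{-}$-energy of $f_{[\om]_m}$, then
\[
 r^{(\nu)}q_{j^\nu}=\frac{s_\nu^2\rho_m+a_\nu^2}{\rho_m+1}+o(1),\qquad\rho_{m+1}=\Bigl(\frac{s_\nu}{a_\nu}\Bigr)^2\rho_m+o(\rho_m+1);
\]
thus $\rho$ is essentially strictly increasing along a run of corner-$j$ letters, and the right-hand side of the first equation is a strictly increasing function of $\rho_m$ sweeping from $a_\nu^2/r^{(\nu)}$ to $s_\nu^2/r^{(\nu)}=r^{(\nu)}$.

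It remains to produce the contradiction. Under Condition~(A): a conditional Borel--Cantelli argument (the $\lm_q$-conditional probability that the next letter is a corner-$i$ letter is $\ge\min_v q_v>0$) shows that $\lm_q$-a.e.\ $\om$ contains, for each $i\in S_0$, arbitrarily long runs of corner-$i$ letters starting at arbitrarily large positions. Apart from a bounded initial segment of such a run, $\rho$ is large, so there $r^{(\nu)}q_{i^\nu}=s_\nu^2+o(1)=(r^{(\nu)})^2+o(1)$; by the pigeonhole principle some type $\nu$ occurs there, and letting the run and its starting position grow yields $q_{i^\nu}=r^{(\nu)}$, contradicting~(A) (the degenerate case that the direction of $f_{[\om]_m}$ stays on $E_i^{+}$ or $E_i^{-}$ is disposed of by passing to another of the three corners). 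Under Condition~(B): combining~($\star$) with the conditional Borel--Cantelli lemma (the conditional probability of actually following a prescribed block of bounded length is bounded below), $\lm_q$-a.e.\ $\om$ contains, for prescribed distinct $i,j\in S_0$, infinitely many occurrences, at positions $m$ as large as desired, of a block $i^{\nu_1}\cdots i^{\nu_{l_0}}j^{\nu_{l_0+1}}\cdots j^{\nu_{l_0+l_1+l_2}}$ whose last $l_2$ types cover its middle $l_1$ types. After the corner-$i$ part, the direction of $f_{[\om]_m}$ is confined to a fixed compact subarc of the projective line which, for a suitable choice of the pair $(i,j)$ among the three corners, stays away from $[E_j^{+}]$; hence $\rho$ is bounded above at the start of the corner-$j$ part, and, since that part is long and $\rho$ is multiplied at each step by $(s_\nu/a_\nu)^2>1$, it splits into a segment with $\rho$ small (where $q_{j^\nu}=a_\nu^2/r^{(\nu)}+o(1)$), a bounded transition, and a segment with $\rho$ large (where $q_{j^\nu}=r^{(\nu)}+o(1)$). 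By the type-covering hypothesis some type $\nu$ occurs in the covered portion of both the small-$\rho$ and the large-$\rho$ segments, whence $a_\nu^2/r^{(\nu)}=r^{(\nu)}$; this is false since $a_\nu<r^{(\nu)}$, the desired contradiction.

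The main obstacle is the transition, in the second paragraph, from the asymptotic-harmonicity statement (which is natural with respect to $\mu_{\la f\ra}$) to one holding $\mu_q$-almost everywhere; this is exactly why one assumes the absolutely continuous part nonzero and restricts to $\tilde B$. The remaining difficulty is bookkeeping: locating $\mu_q$-almost everywhere the required corner runs, respectively the blocks of~($\star$); tracking where along a corner-$j$ run $\rho$ passes from small to large (its location is not uniform in $\om$, which is exactly what the type-covering in~(B) compensates for); and eliminating, using that there are three corners, the degenerate directions $E_i^{\pm}$. The elementary identity $s_\nu=r^{(\nu)}$ and the inequality $a_\nu<s_\nu$, both consequences of \eqref{eq:consistency}, are what tie Conditions~(A) and~(B) to the contradiction.
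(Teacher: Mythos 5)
Your route is genuinely different from the paper's: you argue by contradiction from the martingale convergence theorem (Lebesgue decomposition of $\lm_{\la f\ra}$ against $\lm_q$, together with the nice observation that $1-Q(f_{[\om]_m})/e_{[\om]_m}$ is a non-negative $\tilde\mu$-supermartingale with vanishing mean, so $f$ is asymptotically harmonic $\mu_{\la f\ra}$-a.e.), and you work with general $f\in\cF$ directly; the paper instead proves a uniform quantitative deviation estimate for harmonic functions (Lemma~\ref{lem:key}), feeds it into a Kakutani-type criterion (Theorem~\ref{th:criterion}, divergence of $\sum(1-\E[\sqrt{\a_n}\mid\cB_{n-2}])$), and only then passes to general $f$ by density. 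Your structural inputs (the $\nu$-independent eigenlines, second eigenvalue exactly $r^{(\nu)}$ with eigenvector $v_j$, $|s^{(\nu)}|<r^{(\nu)}$, $(u_j,v_i)\ne0$) coincide with the paper's Lemmas~\ref{lem:u}--\ref{lem:C}, and the Condition~(A) branch is essentially sound, with one caveat: ``apart from a bounded initial segment of such a run, $\rho$ is large'' needs $\rho$ at the start of the corner-$i$ run to be bounded below \emph{uniformly} in the position, which forces you to choose the corner adaptively ($i=i(m)$ with $(u_{i(m)},x'')^2\ge\ph(x'')/3$, the role of the paper's decomposition $\cK=\bigcup_{i}\cK_i$); having long corner-$i$ runs for each fixed $i$ is not by itself enough, since the $E_i^{+}$-component may degenerate along the starting positions of those runs.

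The Condition~(B) branch has a genuine gap: the ``segment with $\rho$ small (where $q_{j^\nu}=a_\nu^2/r^{(\nu)}+o(1)$)'' does not exist, so the contradiction you extract, $a_\nu^2/r^{(\nu)}=r^{(\nu)}$, is never actually produced. After the corner-$i$ block of length $l_0$ the direction of $f_{[\om]_m}$ is pinned near $[v_i]$, and $v_i$ has a non-degenerate $E_j^{+}$-component for every $j$ (Lemma~\ref{lem:C}(2): $(u_j,v_i)=-1/2$ for $j\ne i$); hence $\rho$ at the first corner-$j$ step is bounded \emph{below} as well as above by absolute constants, is multiplied by $(r^{(\nu)}/s^{(\nu)})^2>1$ at each subsequent step, and is therefore never close to $0$: no step of the corner-$j$ run yields $q_{j^\nu}=a_\nu^2/r^{(\nu)}+o(1)$. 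The argument can be closed with the ingredients you already have, but differently: the first corner-$j$ letter, of type $\nu_1$, sees $\rho\le C$ and so forces $q_{j^{\nu_1}}\le g_{\nu_1}(C)+o(1)$, where $g_\nu(\rho)=(s_\nu^2\rho+a_\nu^2)/(r^{(\nu)}(\rho+1))$ is strictly increasing with $g_{\nu}(C)<r^{(\nu)}$ by a definite margin, while the type-covering in~($\star$) makes $\nu_1$ reappear among the last $l_2$ letters, where $\rho$ is large (take $l_1$ large) and forces $q_{j^{\nu_1}}=r^{(\nu_1)}+o(1)$; these are incompatible. This ``early versus late along one corner-$j$ run'' comparison is the correct substitute for your small-$\rho$/large-$\rho$ dichotomy; the paper's Case~II in Lemma~\ref{lem:key} manufactures the same incompatibility by instead comparing two corners $j\in\{i,i'\}$ via Lemma~\ref{lem:C}(3), which is why it needs no lower bound on $\rho$ at all.
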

We provide some typical examples.
\begin{example}
Let $\nu\in T$ and define $L=\{L_w\}_{w\in W_*}$ by $L_w=\nu$ for all $w\in W_*$. Then, $G(L)$ is equal to $\SG(\nu)$. 
In this case, Condition~($\star$) is trivially satisfied for all $\om\in\tilde\Sg$ by letting $l_2=1$ because both sides of \eqref{eq:A2} are equal to $\{\nu\}$. Thus, by Theorem~\ref{th:main}, $\mu_{\la f\ra}\perp\mu_q$ for every $f\in\cF$ and $q\in\cA$.
This singularity has been proved in \cite{HN06} already.
\end{example}
\begin{example}
Take any sequence $\{\tau_m\}_{m\in\Z_+}\in T^{\Z_+}$ and let $L_w=\tau_{|w|}$ for $w\in W_*$. The set $G(L)$ associated with $L=\{L_w\}_{w\in W_*}$ has been studied in, e.g., \cite{Ha92,BH97,KM20}, and called a scale irregular Sierpinski gasket. 
\begin{enumerate}
\item Let $q=\{q_w\}_{w\in S}\in\cA$ be given by $q_v=N(\nu)^{-1}$ for $v\in S^{(\nu)}$. The associated measure $\mu_q$ is regarded as a uniform measure on $G(L)$. Since $N(\nu)^{-1}<\nu^{-1}$, Condition~(A) holds from Lemma~\ref{lem:r}. Therefore, $\mu_{\la f\ra}\perp \mu_q$ for any $f\in \cF$ from Theorem~\ref{th:main}. This case was discussed in \cite[Section~5]{KM20}.
\item \begin{enumerate}
\item Suppose that there exists $l_2\in\N$ such that $\{\tau_{k+1},\tau_{k+2},\dots,\tau_{k+l_2}\}=T$ for infinitely many $k\in\Z_+$. Then, Condition~($\star$) is satisfied for all $\om\in\tilde\Sg$, in view of \eqref{eq:A2'}. 
\item Suppose that for each $l\in\N$ there exists $k\in\Z_+$ such that $\tau_{k+1}=\tau_{k+2}=\dots=\tau_{k+l}$. Then, Condition~($\star$) with $l_2=1$ is satisfied for all $\om\in\tilde\Sg$ and $l_0,l_1\in\N$, but \eqref{eq:A2'} may fail to hold for any $l_2$. 
\end{enumerate}
In either case, $\mu_{\la f\ra}\perp\mu_q$ for any $f\in \cF$ and any $q\in\cA$ from Theorem~\ref{th:main}.
\end{enumerate}
\end{example}
\begin{example}
Let $\rho$ be a probability measure on $T$ with full support. We take a family of $T$-valued i.i.d.\ random variables $\{L_w(\cdot)\}_{w\in W_*}$ with distribution $\rho$ that are defined on some probability space $(\hat\Omega,\hat\cB,\hat P)$. For each $\hat\om\in\hat\Omega$, we can define an inhomogeneous Sierpinski gasket $G(L(\hat\om))$ associated with $L(\hat\om):=\{L_w(\hat\om)\}_{w\in W_*}$. This is called a random recursive Sierpinski gasket~\cite{Ha97}. 
Then, the following holds.
\begin{theorem}\label{th:RSG}
For $\hat P$-a.s.\ $\hat\om$, $G(L(\hat\om))$ satisfies Condition~\textup{(B)} for all $q\in\cA$. That is, for $\hat P$-a.s.\ $\hat\om$, the Dirichlet form on $G(L(\hat\om))$ can apply Theorem~\ref{th:main} for all $q\in\cA$ to conclude that the energy measures and $\mu_q$ are mutually singular for all $q\in\cA$.
\end{theorem}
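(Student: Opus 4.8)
The plan is to deduce Condition~(B) from a second Borel--Cantelli--type statement along the tree $\tilde W_*$, using \eqref{eq:A2'} as the sufficient condition for \eqref{eq:A2}. Fix $l_0,l_1\in\N$ and set $l_2:=|T|$ and $M:=l_0+l_1+l_2$. Call a word $w\in\tilde W_{nM}$ \emph{good} if for every pair $i,j\in S_0$ the $l_2$ superscripts carried by the last $l_2$ letters of the word obtained from $w$ by appending $l_0$ letters of index $i$ and then $l_1+l_2$ letters of index $j$ --- all superscripts being forced by membership in $\tilde W_{(n+1)M}$, cf.\ the remark following Condition~(B) --- together exhaust $T$. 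By \eqref{eq:A2'}, if $[\om]_{nM}$ is good then $\om$ fulfils the implication in ($\star$) for $k=nM$; hence it suffices to show that, for $\hat P$-a.e.\ $\hat\om$ and every $q\in\cA$, $\lm_q$-a.e.\ $\om$ has $[\om]_{nM}$ good for infinitely many $n$ (we work with $\lm_q$ on $\Sg$, which is equivalent since $\mu_q=(\pi|_{\tilde\Sg})_*\lm_q$).

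The first key step is a positivity estimate, uniform in $w$ and $n$: for every $w\in\tilde W_{nM}$,
\[
\hat P\bigl(w\text{ not good}\mid\sigma(L_v:|v|<nM)\bigr)\ \le\ 1-p_1,\qquad p_1:=c_{l_2}^{\,9}>0,\quad c_{l_2}:=\hat P\bigl(l_2\text{ i.i.d.\ }\rho\text{-samples cover }T\bigr).
\]
Indeed, for each of the nine pairs $(i,j)\in S_0^2$ the appended word is admissible (the corner letters $1^\nu,2^\nu,3^\nu$ lie in every $S^{(\nu)}$), and the nodes carrying its last $l_2$ superscripts form a chain of length $l_2$ rooted at depth $nM+l_0+l_1$. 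These nine chains lie in pairwise disjoint subtrees, because the appended words already diverge at depth $\le nM+l_0+1$ (at depth $nM+1$ if the first appended letters differ, at depth $nM+l_0+1$ otherwise), which is $\le nM+l_0+l_1$. Hence, conditionally on the $L$-values of the nodes at depths $nM,\dots,nM+l_0+l_1-1$ --- which pins down the root of each chain --- the $9l_2$ relevant $L$-values are i.i.d.\ $\rho$, so the nine events ``this chain's superscripts exhaust $T$'' are conditionally independent, each of probability $c_{l_2}$; and $c_{l_2}\ge|T|!\,\prod_{\tau}\rho(\tau)>0$ since $\rho$ has full support. The bound is independent of $\sigma(L_v:|v|<nM)$ because goodness of $w$ involves only the $L$-values of $w$ and its descendants.

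The second key step is a conditional Borel--Cantelli estimate. Fix $q\in\cA$ and $N\in\Z_+$, and for $n\ge N$ let $B_{N,n}:=\bigcup\Sg_w$, the union over $w\in\tilde W_{nM}$ such that $[w]_{n'M}$ is not good for every $n'\in\{N,\dots,n\}$. Conditioning on $\sigma(L_v:|v|<nM)$, the weights $q_w=\lm_q(\Sg_w)$, the membership $w\in\tilde W_{nM}$, and goodness of $[w]_{n'M}$ for $n'\le n-1$ are all measurable, while by the first step $\hat P(w\text{ not good}\mid\sigma(L_v:|v|<nM))\le 1-p_1$ for each $w\in\tilde W_{nM}$; together with the normalization $\sum_{w\in\tilde W_{nM},\,[w]_{(n-1)M}=u}q_w/q_u=1$ (from $\sum_{v\in S^{(\nu)}}q_v=1$) this yields $\E\bigl[\lm_q(B_{N,n})\mid\sigma(L_v:|v|<nM)\bigr]\le(1-p_1)\lm_q(B_{N,n-1})$, where $\E$ is expectation under $\hat P$. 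Iterating down to $n=N$ gives $\E[\lm_q(B_{N,n})]\le(1-p_1)^{\,n-N+1}$; letting $n\to\infty$ and then $N\to\infty$ gives $\E\bigl[\lm_q\{\om:[\om]_{nM}\text{ good for only finitely many }n\}\bigr]=0$. Thus for every fixed $q$ and every $(l_0,l_1)$ the condition ($\star$) with $l_2=|T|$ holds $\lm_q$-a.e., $\hat P$-a.s.; intersecting over the countably many pairs $(l_0,l_1)$, $\hat P$-a.s.\ $G(L(\hat\om))$ satisfies Condition~(B) for that $q$.

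The main obstacle is the final step: passing from ``for each $q$, $\hat P$-a.s.'' to ``$\hat P$-a.s., for all $q\in\cA$''. This is delicate because $\cA$ is uncountable and $\lm_q,\lm_{q'}$ tend to mutual singularity as the block index grows, so a plain continuity argument in $q$ is unavailable. The favourable features are that $l_2=|T|$ and the event ``($\star$) holds for $\om$'' are $q$-free, and that the estimate of the second step is $q$-uniform --- the constant $p_1$ does not depend on $q$, which enters the induction only through $\sum_w q_w/q_u=1$. I expect the correct route to be a direct verification that, $\hat P$-a.s., the ($q$-independent) set of $\om$ failing ($\star$) is $\lm_q$-null for \emph{every} $q\in\cA$, obtained by running the block-wise estimate with the supremum over $q$ restricted to a compact exhaustion $\{q\in\cA:\min_v q_v\ge 1/k\}$ of $\cA$ and then letting $k\to\infty$ along $\N$. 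Making the supremum interact correctly with the conditioning in the induction --- in particular controlling the small but $q$-adversarial contribution of nodes possessing no ``good'' continuation --- is the technical heart of this step.
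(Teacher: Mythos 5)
Your first two steps are sound and essentially reproduce the paper's argument in a conditional-expectation formulation: the paper instead works with the product measure $\P(A)=\int_{\hat\Om}\mu_q^{(\hat\om)}(A_{\hat\om})\,\hat P(d\hat\om)$ on $\Sg\times\hat\Om$, shows that the events $U((l_0+l_1+l_2)n)$ (your ``good'' blocks) are independent under $\P$ with $\P(U(k))=p=\bigl(l_2!\prod_{\nu\in T}\rho(\{\nu\})\bigr)^9$, and deduces $\P(F_{M,N})=(1-p)^{N-M}$ for the event that no block between $M$ and $N$ is good; your disjoint-subtree justification of the exponent $9$ and your iteration $\E[\lm_q(B_{N,n})]\le(1-p_1)^{n-N+1}$ match this. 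For a single fixed $q$ this does yield Condition~(B) $\hat P$-a.s.

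The genuine gap is the final step, which you correctly identify as the technical heart but do not carry out. Your proposed route --- putting a supremum over a compact exhaustion of $\cA$ inside the block-wise induction --- does not obviously work: each slice $\{q\in\cA:\min_v q_v\ge 1/k\}$ is still uncountable, and $\E[\sup_q\lm_q(B_{N,n})]$ does not satisfy the same recursion as $\sup_q\E[\lm_q(B_{N,n})]$. The paper closes this gap differently and quantitatively: (i) from $\E[\mu_q^{(\hat\om)}(F_{M,N,\hat\om})]=(1-p)^{N-M}$, Markov's inequality plus Borel--Cantelli give that $\hat P$-a.s.\ $\mu_q^{(\hat\om)}(F_{M,N,\hat\om})<(1-p)^{N/2}$ for all large $N$; (ii) for $q'$ in the multiplicative neighborhood $\cU_q=\{q'\in\cA: q'_v/q_v<(1-p)^{-1/(4(l_0+l_1+l_2))}\text{ for all }v\}$, the Radon--Nikodym derivative of $\mu_{q'}^{(\hat\om)}$ with respect to $\mu_q^{(\hat\om)}$ on the $\sg$-field generated by depth-$(l_0+l_1+l_2)N$ cylinders is at most $(1-p)^{-N/4}$, so $\mu_{q'}^{(\hat\om)}(F_{M,N,\hat\om})\le(1-p)^{N/4}\to0$ on the same $\hat P$-full event; (iii) since $\cA$ is $\sg$-compact, countably many such neighborhoods cover it, and the union of the corresponding $\hat P$-null sets is null. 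The idea you are missing is that the almost-sure geometric decay rate $(1-p)^{N/2}$ of the bad-set measure beats the geometric growth rate $(1-p)^{-N/4}$ of the density $d\mu_{q'}^{(\hat\om)}/d\mu_q^{(\hat\om)}$ on finite $\sg$-fields for $q'$ near $q$; this converts the pointwise-in-$q$ statement into a locally uniform one and reduces the problem to countably many $q$.
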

\end{example}
Theorems~\ref{th:main} and \ref{th:RSG} are proved in Section~4.
\section{Preliminary lemmas}
In this section, we provide the necessary concepts and lemmas for proving Theorem~\ref{th:main}.
We fix $L=\{L_w\}_{w\in W_*}\in T^{W_*}$ and $q\in\cA$ and retain the notation used in the previous section.

For $w\in \tilde W_*$, let $K_w$ denote $\pi(\Sg_w\cap\tilde\Sg)\,({}=\psi_w(\tilde K)\cap G(L))$.

Let $m\in\Z_+$ and $x\in l(V_m)$. There exists a unique $h\in\cF$ that attains 
\[
\inf\{\cE(f,f)\mid f\in\cF\text{ and }f|_{V_m}=x\}.
\]
We call such $h$ a piecewise harmonic (more precisely, an $m$-harmonic) function. When $m=0$, $h$ is called a harmonic function and is denoted by $\iota(x)$.
\begin{lemma}\label{lem:dense}
For $f\in\cF$ and $m\in\Z_+$, let $f_m$ be an $m$-harmonic function such that $f_m=f$ on $V_m$.
Then, $f_m$ converges to $f$ in $\cF$ as $m\to\infty$.
In particular, the totality of piecewise harmonic functions is dense in $\cF$.
\end{lemma}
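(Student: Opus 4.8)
The plan is to split the $\cF$-norm of $f-f_m$ into its energy part and its $L^2$ part and control each separately. Write $g_m:=f-f_m$, so that $g_m|_{V_m}=0$. For the energy part, the key identity is $\cE(f_m,f_m)=\cE^{(m)}(f|_{V_m},f|_{V_m})$: the $m$-harmonic extension of a function prescribed on $V_m$ realizes, as an element of $\cF$, exactly the discrete energy $\cE^{(m)}$. This is the standard compatibility property of resistance forms built from a consistent sequence $\{\cE^{(m)}\}$, and is precisely where \eqref{eq:consistency} and \eqref{eq:consistency2}, iterated cell by cell, are used (see \cite[Chapter~2]{Ki}). Granting this, the sequence $\cE(f_m,f_m)=\cE^{(m)}(f|_{V_m},f|_{V_m})$ is non-decreasing and, by the very definition of $\cE$ on $\cF$, converges to $\cE(f,f)$.

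Next I would establish the orthogonality $\cE(f_m,g_m)=0$. Since $g_m$ vanishes on $V_m$, the function $f_m+t g_m$ agrees with $f$ on $V_m$ for every $t\in\R$ and hence is an admissible competitor in the minimization defining $f_m$; thus the quadratic $t\mapsto \cE(f_m,f_m)+2t\,\cE(f_m,g_m)+t^2\,\cE(g_m,g_m)$ attains its minimum at $t=0$, which forces $\cE(f_m,g_m)=0$. Expanding $\cE(f,f)=\cE(f_m+g_m,f_m+g_m)$ then yields $\cE(g_m,g_m)=\cE(f,f)-\cE(f_m,f_m)\to 0$.

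For the $L^2$ part I would prove the stronger statement that $f_m\to f$ uniformly on $G(L)$. Fix $m$ and $w\in\tilde W_m$. On the cell $K_w$ the functions $f$ and $f_m$ coincide at the three points of $\psi_w(V_0)$, and $f_m$ restricted to $K_w$ is harmonic; by the maximum principle (\cite[Proposition~2.1.7]{Ki}, applied on the graphs $V_{m'}\cap K_w$ for $m'\ge m$ and passed to the limit using continuity of $f_m$ and density of $V_*\cap K_w$ in $K_w$) its range on $K_w$ is contained in $[\min_{\psi_w(V_0)}f,\ \max_{\psi_w(V_0)}f]\subset[\min_{K_w}f,\ \max_{K_w}f]$. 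Consequently $\sup_{K_w}|f-f_m|\le\operatorname{osc}_{K_w}f$. Since $\operatorname{diam}_{\R^2}K_w\le 2^{-m}\operatorname{diam}_{\R^2}\tilde K$ for every $w\in\tilde W_m$ and $f$ is uniformly continuous on the compact set $G(L)$, we get $\|f-f_m\|_\infty\le\sup_{w\in\tilde W_m}\operatorname{osc}_{K_w}f\to 0$, hence also $\|f-f_m\|_{L^2(G(L),\mu_q)}\to 0$.

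Combining the two parts, $\|f-f_m\|_\cF^2=\cE(g_m,g_m)+\|g_m\|_{L^2(\mu_q)}^2\to 0$, which is the assertion, and the density statement follows at once since each $f_m$ is piecewise harmonic. I expect the only delicate points to be the justification of $\cE(f_m,f_m)=\cE^{(m)}(f|_{V_m},f|_{V_m})$ (equivalently, that harmonic extension across cells preserves the discrete energies) and the applicability of the maximum principle to $m$-harmonic functions on the inhomogeneous cells; both, however, are routine consequences of the resistance-form structure together with the cell-wise consistency relations, the genuinely geometric input being only the uniform decay $\operatorname{diam}K_w\le 2^{-m}\operatorname{diam}\tilde K$.
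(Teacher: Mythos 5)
Your proof is correct, and it handles the $L^2$ part exactly as the paper does: the maximum principle on each cell $K_w$, $w\in\tilde W_m$, gives $\sup_{K_w}|f-f_m|\le\operatorname{osc}_{K_w}f$, and uniform continuity of $f$ together with $\operatorname{diam}K_w\le 2^{-m}\operatorname{diam}\tilde K$ yields uniform, hence $L^2(\mu_q)$, convergence. Where you diverge is in upgrading to convergence in $\cF$. The paper argues abstractly: $\{f_m\}$ is bounded in $\cF$ and converges in $L^2$, hence converges weakly in $\cF$, and since $(f_m,f_m)_\cF\to(f,f)_\cF$ the convergence is strong (weak convergence plus convergence of norms in a Hilbert space). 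You instead exploit the variational characterization of $f_m$ directly: since $f_m+t(f-f_m)$ is an admissible competitor for every $t$, you get the orthogonality $\cE(f_m,f-f_m)=0$ and hence the Pythagorean identity $\cE(f-f_m,f-f_m)=\cE(f,f)-\cE^{(m)}(f|_{V_m},f|_{V_m})\to0$. Both routes rest on the same compatibility fact $\cE(f_m,f_m)=\cE^{(m)}(f|_{V_m},f|_{V_m})$, which you correctly flag as the point where \eqref{eq:consistency} and \eqref{eq:consistency2} enter; your orthogonality argument is marginally more self-contained (it avoids invoking weak compactness), while the paper's is shorter to state. Either is a complete proof.
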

\begin{proof}
The proof is standard. 
From the maximum principle (see, e.g., \cite[Lemma 2.2.3]{Ki}), 
\[
\min_{K_w}f\le \min_{\psi_w(V_0)}f=\min_{K_w}f_m \le \max_{K_w}f_m= \max_{\psi_w(V_0)}f\le \max_{K_w}f
\]
for any $w\in \tilde W_m$. Therefore, $f_m$ converges to $f$ uniformly on $G(L)$, in particular, in $L^2(G(L),\mu_q)$ as $m\to\infty$. Because $\{f_m\}_{m\in\Z_+}$ is bounded in $\cF$, it converges to $f$ weakly in $\cF$. Because $\lim_{m\to\infty}(f_m,f_m)_\cF=(f,f)_\cF$, $f_m$ actually converges to $f$ strongly in $\cF$.
\qed
\end{proof}

Let $v\in W_*$. We define $L^{[v]}=\{L_w^{[v]}\}_{w\in W_*}\in T^{W_*}$ by $L_w^{[v]}=L_{vw}$.
Then, we can define a strongly local regular Dirichlet form $(\cE^{[v]},\cF^{[v]})$ on $L^2(G(L^{[v]}),\mu_{q}^{[v]})$, where $\mu_{q}^{[v]}$ is defined in the same way as $\mu_q$ with $L$ replaced by $L^{[v]}$. The energy measure of $f\in\cF^{[v]}$ is denoted by $\mu_{\la f\ra}^{[v]}$.
The following lemma is proved in a straightforward manner by going back to the above definition. 
\begin{lemma}\label{lem:selfsim}
\begin{enumerate}
\item Let $f\in\cF$ and $m\in\N$. For each $v\in\tilde W_m$, $f^{[v]}:=f\circ \psi_v|_{G(L^{[v]})}$ belongs to $\cF^{[v]}$. Moreover, it holds that
\begin{equation}\label{eq:selfsim1}
\cE(f,f)=\sum_{v\in \tilde W_m}\frac1{r_{v}}\cE^{[v]}(f^{[v]},f^{[v]})
\end{equation}
and
\begin{equation}\label{eq:selfsim2}
\mu_{\la f\ra}=\sum_{v\in \tilde W_m}\frac1{r_{v}}(\psi_v|_{G(L^{[v]})})_*\mu_{\la f^{[v]}\ra}^{[v]}.
\end{equation}
If $f$ is an $m$-harmonic function, then $f^{[v]}$ is a harmonic function with respect to $(\cE^{[v]},\cF^{[v]})$.
\item It holds that
\begin{equation}\label{eq:selfsim3}
\mu_q=\sum_{v\in \tilde W_m}{q_{v}}(\psi_v|_{G(L^{[v]})})_*\mu_{q}^{[v]}.
\end{equation}
\end{enumerate}
\end{lemma}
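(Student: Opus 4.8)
The plan is to unwind every object through the self-similarity relation $L_{vw}=L^{[v]}_w$; once the combinatorial data are split correctly, all four assertions drop out by direct computation. First I would fix $m\in\N$ and prove, by induction on $n\ge0$, that
\[
\tilde W_{m+n}=\bigcup_{v\in\tilde W_m}\bigl\{vw\bigm| w\in\tilde W^{[v]}_n\bigr\},
\]
the union over $v$ being disjoint, where $\tilde W^{[v]}_n$ denotes the level-$n$ word set built from $L^{[v]}$; this rests on the fact that, for $v\in\tilde W_m$, the admissible continuations of $v$ of length $n$ are governed by $L_{v\,\cdot}=L^{[v]}_{\cdot}$. From this I would read off $V_{m+n}=\bigcup_{v\in\tilde W_m}\psi_v(V^{[v]}_n)$, with the subgaskets $K_v$ ($v\in\tilde W_m$) overlapping only along $V_m$; $\tilde\Sg=\bigcup_{v\in\tilde W_m}\sg_v(\tilde\Sg^{[v]})$, disjointly; and $\sg_v^{-1}(\tilde\Sg)=\tilde\Sg^{[v]}$. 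I would also record the elementary facts $\psi_{vw}=\psi_v\circ\psi_w$, $r_{vw}=r_vr_w$ and $q_{vw}=q_vq_w$, which hold because the weights $r_{\cdot}$ and $q_{\cdot}$ are assigned letterwise and the letter sets are shift-invariant.

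For part~(1): continuity of $f^{[v]}=f\circ\psi_v|_{G(L^{[v]})}$ is immediate since $\psi_v$ is affine and carries $G(L^{[v]})$ into $K_v\subseteq G(L)$. Substituting $w=vw'$ into the definition of $\cE^{(m+n)}$ and using $r_{vw'}=r_vr_{w'}$ and $\psi_{vw'}=\psi_v\circ\psi_{w'}$ regroups the sum as
\[
\cE^{(m+n)}(x,x)=\sum_{v\in\tilde W_m}\frac1{r_v}\,\cE^{(n),[v]}\bigl(x\circ\psi_v|_{V^{[v]}_n},x\circ\psi_v|_{V^{[v]}_n}\bigr),\qquad x\in l(V_{m+n}),
\]
where $\cE^{(n),[v]}$ is the level-$n$ form for $L^{[v]}$. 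Putting $x=f|_{V_{m+n}}$ and sending $n\to\infty$ shows simultaneously that $\sup_n\cE^{(n),[v]}(f^{[v]}|_{V^{[v]}_n},f^{[v]}|_{V^{[v]}_n})<\infty$, hence $f^{[v]}\in\cF^{[v]}$, and that \eqref{eq:selfsim1} holds. For the harmonic claim: prescribing $f|_{V_m}$ determines $f^{[v]}|_{V_0}=f\circ\psi_v|_{V_0}$ for every $v$, while conversely any family $\{h_v\in\cF^{[v]}\}$ with these boundary values glues along $V_m$ to an element of $\cF$; hence minimizing $\cE(f,f)$ over such $f$ decouples into minimizing each $\cE^{[v]}(h_v,h_v)$ separately, so if $f$ is $m$-harmonic then each $f^{[v]}$ is harmonic for $(\cE^{[v]},\cF^{[v]})$.

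The identity \eqref{eq:selfsim2} I would prove by testing against $g\in\cF$ rather than arguing about null sets. Polarizing \eqref{eq:selfsim1} gives $\cE(h_1,h_2)=\sum_{v\in\tilde W_m}r_v^{-1}\cE^{[v]}(h_1^{[v]},h_2^{[v]})$ for $h_1,h_2\in\cF$; since $\cF$ is an algebra, $f^2$ and $fg$ lie in $\cF$, and $(fg)^{[v]}=f^{[v]}g^{[v]}$, $(f^2)^{[v]}=(f^{[v]})^2$. Applying the polarized identity to $(f,fg)$ and to $(f^2,g)$, and then the defining relation of the energy measure on each factor (with test function $g^{[v]}\in\cF^{[v]}$, legitimate by part~(1)), yields
\[
\int_{G(L)}g\,d\mu_{\la f\ra}=2\cE(f,fg)-\cE(f^2,g)=\sum_{v\in\tilde W_m}\frac1{r_v}\int_{G(L^{[v]})}g^{[v]}\,d\mu^{[v]}_{\la f^{[v]}\ra}.
\]
Because $g^{[v]}\circ(\psi_v|_{G(L^{[v]})})^{-1}=g$ on $K_v$ and $(\psi_v|_{G(L^{[v]})})_*\mu^{[v]}_{\la f^{[v]}\ra}$ is carried by $K_v$, the $v$-th term equals $\int_{G(L)}g\,d\bigl((\psi_v|_{G(L^{[v]})})_*\mu^{[v]}_{\la f^{[v]}\ra}\bigr)$, and \eqref{eq:selfsim2} follows since a finite Borel measure on the compact space $G(L)$ is determined by its integrals against $\cF$, which is dense in $C(G(L))$.

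Finally, part~(2) comes straight from the construction of $\lm_q$: using $\lm_q(\Sg_w)=0$ for $w\notin\tilde W_*$, the pairwise disjointness of $\{\Sg_v\}_{v\in\tilde W_m}$, and $\lm_q(\Sg\setminus\bigcup_{v\in\tilde W_m}\Sg_v)=0$, one checks on cylinder sets that $\lm_q=\sum_{v\in\tilde W_m}q_v(\sg_v)_*\lm^{[v]}_q$, and pushing forward by $\pi|_{\tilde\Sg}$ via $\pi\circ\sg_v=\psi_v\circ\pi$ and $\sg_v^{-1}(\tilde\Sg)=\tilde\Sg^{[v]}$ gives \eqref{eq:selfsim3}. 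I expect no serious obstacle; the only parts requiring a modicum of care are the bookkeeping of the first paragraph (lining up the $L^{[v]}$-indexed objects) and the measure-comparison step for \eqref{eq:selfsim2}, which is what lets us avoid having to know a priori that energy measures do not charge the gluing vertices.
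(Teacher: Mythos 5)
Your proof is correct. The paper offers no proof of this lemma---it simply states that it ``is proved in a straightforward manner by going back to the above definition''---and your argument (regrouping $\tilde W_{m+n}$ over $\tilde W_m$ via $L^{[v]}_w=L_{vw}$, passing to the limit in $\cE^{(m+n)}$, identifying the energy measures by testing against the uniformly dense algebra $\cF\subset C(G(L))$, and checking \eqref{eq:selfsim3} on cylinder sets before pushing forward by $\pi|_{\tilde\Sg}$) is precisely the straightforward verification the authors have in mind.
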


By applying \eqref{eq:selfsim1} with $\cE$ replaced by $\cE^{[\xi]}$ for $\xi\in \tilde W_*$ to $f=\iota(x)$ for $x\in l(V_0)$, we obtain the following identity as a special case: 
\begin{equation}\label{eq:Q}
 r_\xi^{-1}Q(A_{\xi} x)=\sum_{\zt\in W_m;\ \xi\zt\in\tilde W_*}r_{\xi\zt}^{-1}Q(A_{\xi\zt} x),\quad m\in\Z_+.
\end{equation}

Let $f\in\cF$.
For each $m\in\Z_+$, let $\lm_{\la f\ra}^{(m)}$ be a measure on $W_m$ defined as
\[
\lm_{\la f\ra}^{(m)}(C)=2\sum_{v\in C\cap \tilde W_m}r_v^{-1}\cE^{[v]}(f^{[v]},f^{[v]}),\quad
C\subset W_m.
\]
Then, we can verify that $\{\lm_{\la f\ra}^{(m)}\}_{m\in\Z_+}$ are consistent in the sense that $\lm_{\la f\ra}^{(m)}(C)=\lm_{\la f\ra}^{(m+1)}(C\times S)$. By the Kolmogorov extension theorem, there exists a unique Borel measure $\lm_{\la f\ra}$ on $\Sg$ such that
\[
\lm_{\la f\ra}(\Sg_C)=\lm_{\la f\ra}^{(m)}(C)
\quad \text{for any }m\in\Z_+,\ C\subset W_m,
\]
where $\Sg_C=\bigcup_{v\in C}\Sg_v$.
It is easy to see that $\lm_{\la f\ra}(\Sg\setminus \tilde\Sg)=0$.

In particular, if $f=\iota(x)$ for $x\in l(V_0)$, we have
\begin{equation}\label{eq:lm}
\lm_{\la \iota(x)\ra}(\Sg_C)=2\sum_{v\in C\cap \tilde W_m}r_v^{-1}Q(A_v x),\quad
C\subset W_m.
\end{equation}
For simplicity, we write $\lm_{\la x\ra}$ for $\lm_{\la \iota(x)\ra}$.
\begin{lemma}\label{lem:lm}
For $f\in \cF$, $(\pi|_{\tilde \Sg})_* \lm_{\la f\ra}=\mu_{\la f\ra}$.
\end{lemma}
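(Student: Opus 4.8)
The plan is to establish the identity $(\pi|_{\tilde\Sg})_*\lm_{\la f\ra}=\mu_{\la f\ra}$ by comparing both measures on the generating algebra of cylinder sets $\{K_w\}_{w\in\tilde W_*}$, reducing to piecewise harmonic $f$ via the density result of Lemma~\ref{lem:dense}, and then invoking the self-similarity identity \eqref{eq:selfsim2}. First I would verify the claim when $f$ is $m$-harmonic. Fix $n\ge m$ and $w\in\tilde W_n$; I want to show $\mu_{\la f\ra}(K_w)=\lm_{\la f\ra}(\Sg_w)$. Applying \eqref{eq:selfsim2} at level $n$, the mass $\mu_{\la f\ra}(K_w)$ picks out exactly the term $r_w^{-1}\mu_{\la f^{[w]}\ra}^{[w]}(G(L^{[w]}))$ (using that the pieces $K_v$, $v\in\tilde W_n$, overlap only on the finite set $V_n$, which is $\mu_{\la f\ra}$-null since energy measures do not charge points), and this total mass equals $2r_w^{-1}\cE^{[w]}(f^{[w]},f^{[w]})$, which by definition of $\lm_{\la f\ra}^{(n)}$ is precisely $\lm_{\la f\ra}(\Sg_w)$. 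This handles all cylinder sets $K_w$ with $|w|\ge m$, hence all of them, since for shorter $w$ one sums over descendants in $\tilde W_m$.

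Next I would pass from cylinders to all Borel sets: the collection $\{K_w\mid w\in\tilde W_*\}\cup\{\emptyset\}$ is closed under intersection (two cylinders are either nested or meet in a $\mu_q$-null, and $\mu_{\la f\ra}$-null, finite set — one should be slightly careful here, but since both measures agree on all cylinders and are determined on the $\pi$-image by their cylinder values, a monotone class / $\pi$-$\lambda$ argument applies) and generates the Borel $\sigma$-algebra of $G(L)$. Two finite Borel measures agreeing on such a generating $\pi$-system and on the whole space coincide, giving $(\pi|_{\tilde\Sg})_*\lm_{\la f\ra}=\mu_{\la f\ra}$ for piecewise harmonic $f$. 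Alternatively, and perhaps more cleanly, one checks equality of the integrals $\int g\,d\mu_{\la f\ra}$ and $\int g\circ\pi\,d\lm_{\la f\ra}$ for all $g\in\cF$ (or all continuous $g$) directly using \eqref{eq:selfsim2} and the characterization of $\mu_{\la f\ra}$, but the cylinder approach is the most transparent.

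Finally I would remove the harmonicity assumption. Given general $f\in\cF$, take the $m$-harmonic approximants $f_m$ from Lemma~\ref{lem:dense}, so $f_m\to f$ in $\cF$. Since $\mu_{\la\cdot\ra}$ depends continuously on its argument (the standard estimate $\bigl|\mu_{\la g\ra}(A)^{1/2}-\mu_{\la h\ra}(A)^{1/2}\bigr|\le \mu_{\la g-h\ra}(A)^{1/2}\le\cE(g-h,g-h)^{1/2}$, in particular the total masses $2\cE(f_m,f_m)\to 2\cE(f,f)$ and each $\mu_{\la f_m\ra}\to\mu_{\la f\ra}$ setwise on cylinders), and likewise $\lm_{\la f_m\ra}\to\lm_{\la f\ra}$ (the level-$m$ masses are controlled by $\cE^{[v]}$-energies which sum to $\cE(f_m,f_m)$, via \eqref{eq:selfsim1}), passing to the limit in the already-established identity $(\pi|_{\tilde\Sg})_*\lm_{\la f_m\ra}=\mu_{\la f_m\ra}$ yields the result for $f$. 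The main obstacle I anticipate is the bookkeeping around the overlap set $V_*$: one must argue that it is null for every energy measure $\mu_{\la f\ra}$ (so that the sum in \eqref{eq:selfsim2} genuinely decomposes $\mu_{\la f\ra}$ on cylinders without double-counting) and that the images $K_w$ behave like a genuine cylinder structure despite $\pi$ not being injective; both are routine but need to be stated carefully rather than waved away.
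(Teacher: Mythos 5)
Your route (prove equality on the cells $K_w$, then a generating-class argument, then reduce to piecewise harmonic functions by Lemma~\ref{lem:dense}) is genuinely different from the paper's and is workable in outline, but the step you set aside as ``routine bookkeeping'' is exactly where the real difficulty sits, and as written there is a gap. To get $(\pi|_{\tilde\Sg})_*\lm_{\la f\ra}(K_w)=\lm_{\la f\ra}(\Sg_w)$ you need $\lm_{\la f\ra}\bigl((\pi|_{\tilde\Sg})^{-1}(K_w)\setminus\Sg_w\bigr)=0$: the preimage of $K_w$ contains, besides $\Sg_w\cap\tilde\Sg$, the finitely many \emph{other} addresses of the junction points in $K_w\cap K_v$, $v\ne w$. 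So you need the measure $\lm_{\la f\ra}$ \emph{on the shift space} not to charge those addresses. This concerns the abstract Kolmogorov-extension measure, not the energy measure $\mu_{\la f\ra}$, and it follows from no general Dirichlet-form fact. (The half of your obstacle that concerns $\mu_{\la f\ra}$ is fine: $\mu_{\la f\ra}$ charges no point, e.g.\ by the energy image density property, though that deserves a citation rather than an assertion.) Concretely, $\lm_{\la f\ra}(\{\om'\})=\lim_{m\to\infty}2r_{v_m}^{-1}\cE^{[v_m]}(f^{[v_m]})$ with $v_m=[\om']_m$, a non-increasing limit of rescaled energies with no a priori reason to vanish for general $f\in\cF$. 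For piecewise harmonic $f$ one can force it to zero along the eventually-constant-letter tails of junction addresses via Lemma~\ref{lem:A} (there $r_\xi^{-2}Q(A_\xi x)$ stays bounded while $r_\xi\to0$), and this is in fact the \emph{only} place your harmonicity reduction is needed: your cell computation itself uses nothing beyond \eqref{eq:selfsim2} and the definition of $\lm_{\la f\ra}^{(n)}$, which hold for every $f\in\cF$. The $\pi$-system issue you flag is of the same flavor: $\{K_w\}$ is not intersection-stable, and repairing it (say by passing to $K_w\setminus V_{|w|}$) again requires knowing that both measures vanish on $V_*$ and on its $\pi$-preimage.

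It is worth seeing how the paper avoids all of this. Its proof introduces $\chi_m(A)=\sum_{v\in\tilde W_m}r_v^{-1}\mu_{\la f^{[v]}\ra}^{[v]}(\pi(\sg_v^{-1}(A)))$, checks that $\mu_{\la f\ra}(B)=\chi_m\bigl((\pi|_{\tilde\Sg})^{-1}(B)\bigr)$ for closed $B$ and that $\lm_{\la f\ra}(\Sg_C)=\chi_m(\Sg_C)$ for $C\subset\tilde W_m$, and then proves only the \emph{one-sided} bound $\mu_{\la f\ra}(F)\le\lm_{\la f\ra}\bigl((\pi|_{\tilde\Sg})^{-1}(F)\bigr)$ for closed $F$, by squeezing $(\pi|_{\tilde\Sg})^{-1}(F)$ from outside with the decreasing cylinder unions $\Sg_{C_m}$ and using monotonicity of $\chi_m$. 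Equality then comes for free from inner regularity together with the fact that both measures have total mass $2\cE(f,f)$; no non-atomicity, no harmonicity reduction, and no limiting argument in $f$ are needed. If you wish to keep your route, you must supply the missing non-atomicity of $\lm_{\la f\ra}$ at junction addresses (via Lemma~\ref{lem:A} for piecewise harmonic $f$, then your approximation step); otherwise the one-sided-inequality-plus-total-mass device is the cleaner way out.
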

\begin{proof}
This lemma is proved in \cite[Lemma~4.1]{Hi05} when $T$ is a one-point set. In the general case, it suffices to modify the proof line by line by using Lemma~\ref{lem:selfsim} as a substitution of the self-similar property. We provide a proof here for the reader's convenience.

We define a set function $\chi_m$ for $m\in \Z_+$ by
\[
\chi_m(A)=\sum_{v\in\tilde W_m}\frac1{r_v}\mu_{\la f^{[v]}\ra}^{[v]}(\pi(\sg_v^{-1}(A)))
\]
for a $\sg$-compact subset $A$ of $\tilde\Sg$.

Let $B$ be a closed subset of $G(L)$. For $v\in\tilde W_m$,
\begin{align*}
(\psi_v|_{G(L^{[v]})})^{-1}(B)&=\pi\bigl((\pi|_{\tilde\Sg})^{-1}((\psi_v|_{G(L^{[v]})})^{-1}(B))\bigr)\\
&=\pi\bigl(\sg_v^{-1}((\pi|_{\tilde\Sg})^{-1}(B))\bigr).
\end{align*}
Therefore, $\mu_{\la f\ra}(B)=\chi_m\bigl((\pi|_{\tilde\Sg})^{-1}(B)\bigr)$ from \eqref{eq:selfsim2}.

For $C\subset \tilde W_m$,
\begin{align*}
\lm_{\la f\ra}(\Sg_C)
&=\lm_{\la f\ra}^{(m)}(C)\\
&=2\sum_{v\in C}r_v^{-1}\cE^{[v]}(f^{[v]},f^{[v]})\\
&=\sum_{v\in \tilde W_m}r_v^{-1}\mu_{\la f^{[v]}\ra}^{[v]}\bigl(\pi(\sg_v^{-1}(\Sg_C))\bigr)\\
&=\chi_m(\Sg_C).
\end{align*}
Here, in the third equality, we used the identity
\[
\pi(\sg_v^{-1}(\Sg_C))=\begin{cases}
G(L^{[v]})& \text{if }v\in C,\\
\emptyset&\text{otherwise}.
\end{cases}
\]

Let $F$ be a closed subset of $G(L)$. Then, $(\pi|_{\tilde\Sg})^{-1}(F)$ is also closed in $\tilde\Sg$.
For $m\in\Z_+$, let $C_m=\bigl\{w\in \tilde W_m\bigm| \Sg_w\cap (\pi|_{\tilde\Sg})^{-1}(F)\ne\emptyset\bigr\}$.
Then, $\{\Sg_{C_m}\}_{m=0}^\infty$ is decreasing in $m$ and $\bigcap_{m\in\Z_+}\Sg_{C_m}=(\pi|_{\tilde\Sg})^{-1}(F)$.
By using the monotonicity of $\chi_m$,
\[
\mu_{\la f\ra}(F)=\chi_m\bigl((\pi|_{\tilde\Sg})^{-1}(F)\bigr)
\le \chi_m(\Sg_{C_m})=\lm_{\la f\ra}(\Sg_{C_m}).
\]
Letting $m\to\infty$, we have $\mu_{\la f\ra}(F)\le \lm_{\la f\ra}(F)$.

The inner regularity of $\mu_{\la f\ra}$ and $\lm_{\la f\ra}$ implies that $\mu_{\la f\ra}(B)\le \lm_{\la f\ra}(B)$ for all Borel sets $B$. Because the total measures of $\mu_{\la f\ra}$ and $\lm_{\la f\ra}$ are the same, we also have the reverse inequality by considering $G(L)\setminus B$ in place of $B$.
\qed
\end{proof}

Let $i\in S_0$ and $\nu\in T$. From \cite[Proposition A.1.1 and Theorem A.1.2]{Ki}, both $1$ and $r^{(\nu)}$ are simple eigenvalues of $A_i^{(\nu)}$, and the modulus of another eigenvalue $s^{(\nu)}$ of $A_i^{(\nu)}$ is less than $r^{(\nu)}$.
In our situation, the eigenvectors are explicitly described: the eigenvectors of eigenvalues $1$, $r^{(\nu)}$, $s^{(\nu)}$ are constant multiples of 
\begin{align*}
&\bfone:=\begin{pmatrix}1\\1\\1\end{pmatrix},\quad
\tilde v_1:=\begin{pmatrix}0\\1\\1\end{pmatrix},\quad
y_1:=\begin{pmatrix}0\\1\\-1\end{pmatrix}\quad
\text{for }A_1^{(\nu)},\\
&\bfone,\phantom{{}:=\begin{pmatrix}1\\1\\1\end{pmatrix}{}}\quad
\tilde v_2:=\begin{pmatrix}1\\0\\1\end{pmatrix},\quad
y_2:=\begin{pmatrix}-1\\0\\1\end{pmatrix}\quad
\text{for }A_2^{(\nu)},\\
&\bfone,\phantom{{}:=\begin{pmatrix}1\\1\\1\end{pmatrix}{}}\quad
\tilde v_3:=\begin{pmatrix}1\\1\\0\end{pmatrix},\quad
y_3:=\begin{pmatrix}1\\-1\\0\end{pmatrix}\quad
\text{for }A_3^{(\nu)},
\end{align*}
respectively. Here, we identify $x\in l(V_0)$ with $\begin{pmatrix}x(p_1)\\x(p_2)\\x(p_3)\end{pmatrix}$. It is crucial for subsequent arguments that the eigenvectors of eigenvalue $r^{(\nu)}$ are independent of $\nu$.

Let $\tilde l(V_0)$ be the set of all $x\in l(V_0)$ such that $\sum_{p\in V_0}x(p)=0$. The orthogonal linear space of $\tilde l(V_0)$ in $l(V_0)$ is one-dimensional and spanned by $\bfone$.
The function $\tilde l(V_0)\ni x\mapsto Q(x,x)^{1/2}\in \R$ defines a norm on $\tilde l(V_0)$.
Let $P$ denote the orthogonal projection from $l(V_0)$ onto $\tilde l(V_0)$. 
For each $i\in S_0$, $u_i\in l(V_0)$ denotes the column vector $(D_{p,p_i})_{p\in V_0}$.
\begin{lemma}[{see, e.g.,~\cite[Lemma~5]{HN06} and \cite[Lemma~A.1.4]{Ki}}] \label{lem:u}
For each $i\in S_0$ and $\nu\in T$, $u_i$ is an eigenvector of ${\,}^t\!A_i^{(\nu)}$ with respect to the eigenvalue $r^{(\nu)}$. Moreover, $u_i\in \tilde l(V_0)$.
\end{lemma}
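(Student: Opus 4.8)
The idea is to use the variational description of $A_i^{(\nu)}$ together with the consistency relation \eqref{eq:consistency}, probing with the indicator function of the corner vertex $p_i$. The assertion $u_i\in\tilde l(V_0)$ is immediate: since $D$ is symmetric with vanishing row sums, $\sum_{p\in V_0}u_i(p)=\sum_{p\in V_0}D_{p,p_i}=\sum_{p\in V_0}D_{p_i,p}=-2+1+1=0$. For the eigenvalue claim, fix $x\in l(V_0)$ and let $y\in l(V_1^{(\nu)})$ be the minimizer in \eqref{eq:consistency}, so that $y|_{V_0}=x$ and $y\circ\psi_j^{(\nu)}|_{V_0}=A_j^{(\nu)}x$ for every $j=1,\dots,N(\nu)$.

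\textbf{A trace identity.} I would first record that
\begin{equation}\label{eq:traceid}
Q^{(\nu)}(y,g)=Q(x,g|_{V_0})\qquad\text{for all }g\in l(V_1^{(\nu)}).
\end{equation}
Indeed, writing $y_z\in l(V_1^{(\nu)})$ for the minimizer in \eqref{eq:consistency} associated with $z\in l(V_0)$ (which depends linearly on $z$, as used in the definition of the $A_j^{(\nu)}$), the form $B(z,z'):=Q^{(\nu)}(y_z,y_{z'})$ is symmetric and bilinear on $l(V_0)$ and coincides with $Q$ on the diagonal by \eqref{eq:consistency}; hence $B=Q$ by polarization. The minimality of $y=y_x$ gives $Q^{(\nu)}(y,h)=0$ whenever $h\in l(V_1^{(\nu)})$ vanishes on $V_0$; applying this to $h=g-y_{g|_{V_0}}$ yields $Q^{(\nu)}(y,g)=Q^{(\nu)}(y,y_{g|_{V_0}})=B(x,g|_{V_0})=Q(x,g|_{V_0})$, which is \eqref{eq:traceid}.

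\textbf{Probing with $\mathds{1}_{\{p_i\}}$ and conclusion.} Take $g=\mathds{1}_{\{p_i\}}\in l(V_1^{(\nu)})$, the function equal to $1$ at $p_i$ and $0$ elsewhere. The corner $p_i$ of $\tilde K$ is a vertex of exactly one cell $K_j^{(\nu)}$, namely $K_i^{(\nu)}$ (by the labeling convention, with $\psi_i^{(\nu)}(p_i)=p_i$); consequently $g\circ\psi_j^{(\nu)}|_{V_0}=0$ for $j\neq i$ and $g\circ\psi_i^{(\nu)}|_{V_0}=\mathds{1}_{\{p_i\}}\in l(V_0)$. Substituting this into the definition of $Q^{(\nu)}$ and into \eqref{eq:traceid} gives
\[
\frac1{r^{(\nu)}}\,Q\bigl(A_i^{(\nu)}x,\mathds{1}_{\{p_i\}}\bigr)=Q^{(\nu)}(y,g)=Q\bigl(x,\mathds{1}_{\{p_i\}}\bigr).
\]
Finally, for any $z\in l(V_0)$ one has $Q(z,\mathds{1}_{\{p_i\}})=(-Dz,\mathds{1}_{\{p_i\}})=-(Dz)(p_i)=-\sum_{q\in V_0}D_{p_i,q}z(q)=-(u_i,z)$ by symmetry of $D$; applying this with $z=A_i^{(\nu)}x$ and with $z=x$ turns the last display into $r^{(\nu)\,-1}(u_i,A_i^{(\nu)}x)=(u_i,x)$, that is, $({\,}^t\!A_i^{(\nu)}u_i,x)=r^{(\nu)}(u_i,x)$ for all $x\in l(V_0)$, whence ${\,}^t\!A_i^{(\nu)}u_i=r^{(\nu)}u_i$. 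The only steps requiring genuine care are the polarization yielding \eqref{eq:traceid} (alternatively one may quote \cite[Lemma~A.1.4]{Ki}) and the geometric fact that $p_i$ lies in a single subcell; the rest is bookkeeping with the sign conventions of $D$ and $Q$.
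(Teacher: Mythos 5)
Your proof is correct. The paper itself offers no argument for this lemma---it is stated with a pointer to \cite[Lemma~5]{HN06} and \cite[Lemma~A.1.4]{Ki}---so there is nothing internal to compare against; what you have written is a sound, self-contained reconstruction of the standard argument. The membership $u_i\in\tilde l(V_0)$ from the vanishing column sums of $D$ is immediate, the polarization step identifying $Q^{(\nu)}(y_z,y_{z'})$ with $Q(z,z')$ is legitimate because the paper already grants that $z\mapsto y_z$ is linear and \eqref{eq:consistency} gives agreement on the diagonal, and the first-order minimality condition $Q^{(\nu)}(y,h)=0$ for $h|_{V_0}=0$ is the right tool. Probing with the indicator of $p_i$ then works precisely because the corner $p_i$ lies in the single cell $K_i^{(\nu)}$ (the labeling convention $\psi_i^{(\nu)}(p_i)=p_i$ fixed in Section~2), which collapses the sum defining $Q^{(\nu)}$ to the $i$-th term; your trace identity is exactly the Gauss--Green compatibility of Neumann derivatives that underlies Kigami's Lemma~A.1.4, so in substance you have rederived the cited result rather than found a genuinely different route. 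The sign bookkeeping $Q(z,\mathds{1}_{\{p_i\}})=-(u_i,z)$ checks out against the conventions $Q(x,y)=(-Dx,y)$ and $u_i=(D_{p,p_i})_{p\in V_0}$.
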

We also note that $(u_i,\bfone)=(u_i,y_i)=0$.
We take $v_i\in l(V_0)$ such that $v_i$ is a constant multiple of $\tilde v_i$ and $(u_i,v_i)=1$.
\begin{lemma}\label{lem:A}
Let $i\in S_0$, $x\in l(V_0)$, and $\bfnu=\{\nu_k\}_{k\in\N}\in T^\N$.
Then, it holds that
\begin{equation}\label{eq:A}
\lim_{n\to\infty} r_{i^{\nu_1}i^{\nu_2}\cdots i^{\nu_n}}^{-1} P A_{i^{\nu_1}i^{\nu_2}\cdots i^{\nu_n}}x=(u_i,x) Pv_i
\end{equation}
and
\begin{equation}\label{eq:B}
\lim_{n\to\infty} r_{i^{\nu_1}i^{\nu_2}\cdots i^{\nu_n}}^{-2} Q(A_{i^{\nu_1}i^{\nu_2}\cdots i^{\nu_n}}x)=(u_i,x)^2 Q(v_i).
\end{equation}
Moreover, these convergences are uniform in $i\in S_0$, $x\in \cC$, and $\bfnu\in T^\N$, where $\cC$ is the inverse image of an arbitrary compact set of $l(V_0)$ by $P$.
\end{lemma}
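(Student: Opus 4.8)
The plan is to exploit that, for a fixed $i\in S_0$, the matrices $A_i^{(\nu)}$ ($\nu\in T$) all share the \emph{same} basis of eigenvectors $\{\bfone,\tilde v_i,y_i\}$, with eigenvalues $1$, $r^{(\nu)}$, $s^{(\nu)}$ respectively; only the eigenvalues depend on $\nu$. So I would first write $x=a\bfone+b\tilde v_i+cy_i$, where $a,b,c\in\R$ depend linearly on $x$ (and on $i$), and recall that $r_{i^{\nu_1}\cdots i^{\nu_n}}=r^{(\nu_1)}\cdots r^{(\nu_n)}$ and $A_{i^{\nu_1}\cdots i^{\nu_n}}=A_i^{(\nu_n)}\cdots A_i^{(\nu_1)}$. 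Applying the factors one at a time,
\[
A_{i^{\nu_1}\cdots i^{\nu_n}}x=a\bfone+b\Bigl(\textstyle\prod_{k=1}^n r^{(\nu_k)}\Bigr)\tilde v_i+c\Bigl(\textstyle\prod_{k=1}^n s^{(\nu_k)}\Bigr)y_i,
\]
hence
\[
r_{i^{\nu_1}\cdots i^{\nu_n}}^{-1}A_{i^{\nu_1}\cdots i^{\nu_n}}x=a\,r_{i^{\nu_1}\cdots i^{\nu_n}}^{-1}\bfone+b\,\tilde v_i+c\Bigl(\textstyle\prod_{k=1}^n\tfrac{s^{(\nu_k)}}{r^{(\nu_k)}}\Bigr)y_i.
\]

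Applying $P$ kills the $\bfone$-term and, since $y_i\in\tilde l(V_0)$, fixes $y_i$, leaving
\[
r_{i^{\nu_1}\cdots i^{\nu_n}}^{-1}PA_{i^{\nu_1}\cdots i^{\nu_n}}x=b\,P\tilde v_i+c\Bigl(\textstyle\prod_{k=1}^n\tfrac{s^{(\nu_k)}}{r^{(\nu_k)}}\Bigr)y_i.
\]
Because $T$ is finite and $|s^{(\nu)}|<r^{(\nu)}$ for each $\nu\in T$, there is $\theta\in(0,1)$ with $|s^{(\nu)}|\le\theta\,r^{(\nu)}$ for all $\nu\in T$, so the last term has norm at most $\theta^n\,|c\,y_i|\to0$. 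To identify the limit $bP\tilde v_i$, I would pair $x=a\bfone+b\tilde v_i+cy_i$ with $u_i$ and use $(u_i,\bfone)=(u_i,y_i)=0$ (Lemma~\ref{lem:u} and the remark after it) to obtain $(u_i,x)=b(u_i,\tilde v_i)$; since $v_i$ is the scalar multiple of $\tilde v_i$ with $(u_i,v_i)=1$, this gives $b\tilde v_i=(u_i,x)v_i$, hence $bP\tilde v_i=(u_i,x)Pv_i$, proving \eqref{eq:A}. For \eqref{eq:B}, one notes $Q(\,\cdot\,,\bfone)=0$, so $Q(z)=Q(Pz)$ for all $z\in l(V_0)$; therefore $r_{i^{\nu_1}\cdots i^{\nu_n}}^{-2}Q(A_{i^{\nu_1}\cdots i^{\nu_n}}x)=Q\bigl(r_{i^{\nu_1}\cdots i^{\nu_n}}^{-1}PA_{i^{\nu_1}\cdots i^{\nu_n}}x\bigr)$, and \eqref{eq:B} follows from \eqref{eq:A} by continuity of the quadratic form $Q$.

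For the uniformity, the dependence on $i$ is immaterial since $S_0$ is finite, and the decay rate $\theta^n$ does not involve $\bfnu$. For uniformity in $x\in\cC$, I would observe that $Px=bP\tilde v_i+cy_i$ with $\{P\tilde v_i,y_i\}$ a basis of the two-dimensional space $\tilde l(V_0)$ (a direct check shows these two vectors are linearly independent), so $b$ and $c$ are continuous linear functionals of $Px$; likewise $(u_i,x)=(u_i,Px)$. Hence $|b|$, $|c|$, and $|(u_i,x)|$ stay bounded — uniformly in $i\in S_0$ — as $Px$ runs over a fixed compact set, which makes all of the above estimates uniform over $\cC$. The step I expect to need the most care is precisely this bookkeeping of which quantities are controlled by $Px$ rather than $x$, tied to the fact that the eigenbasis $\{\bfone,\tilde v_i,y_i\}$ is not orthogonal, so that one must apply $P$ at the right moment instead of projecting term by term from the start.
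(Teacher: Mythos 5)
Your proposal is correct and follows essentially the same route as the paper's proof: decompose $x$ in the common eigenbasis $\{\bfone,v_i,y_i\}$ (shared by all $A_i^{(\nu)}$, $\nu\in T$), note that $P$ annihilates the $\bfone$-component, that the $v_i$-component is fixed by $r_{i^{\nu_1}\cdots i^{\nu_n}}^{-1}A_{i^{\nu_1}\cdots i^{\nu_n}}$, that the $y_i$-component decays like $\theta^n$ with $\theta=\max_{\nu\in T}|s^{(\nu)}/r^{(\nu)}|<1$, and identify the coefficient of $v_i$ as $(u_i,x)$ by pairing with $u_i$. Your extra bookkeeping for \eqref{eq:B} via $Q(z)=Q(Pz)$ and for the uniformity over $\cC$ just makes explicit what the paper leaves as ``evident.''
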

\begin{proof}
Note that $P A_{i^{\nu_1}i^{\nu_2}\cdots i^{\nu_n}}\bfone=0$ and $r_{i^{\nu_1}i^{\nu_2}\cdots i^{\nu_n}}^{-1}  A_{i^{\nu_1}i^{\nu_2}\cdots i^{\nu_n}}v_i=v_i$ for all $n$.
Moreover, $|r_{i^{\nu_1}i^{\nu_2}\cdots i^{\nu_n}}^{-1}  A_{i^{\nu_1}i^{\nu_2}\cdots i^{\nu_n}}y_i|\le \theta^n|y_i|$, where $\theta=\max_{\nu\in T}|s^{(\nu)}/r^{(\nu)}|\in [0,1)$.

For  $x\in l(V_0)$ in general, we can decompose $x$ into $x=x_1\bfone+x_2 v_i+x_3 y_i$. By taking the inner product with $u_i$ on both sides, $(u_i,x)=x_2(u_i,v_i)=x_2$. Therefore, \eqref{eq:A} holds, and \eqref{eq:B} follows immediately from \eqref{eq:A}. 
The uniformity of the convergences is evident from the argument above.
\qed
\end{proof}
Although the next lemma can be confirmed by concrete calculation, we provide a proof that is applicable to more general situations.
\begin{lemma}\label{lem:C}
The following hold.
\begin{enumerate}
\item For every $i,j\in S_0$, $Q(v_i,v_i)=Q(v_j,v_j)>0$. For  $j\in S_0$ and $i,i'\in S_0\setminus\{j\}$, $(Dv_j)(p_i)=(Dv_j)(p_{i'})$.
\item For every $i,j\in S_0$, $(u_i,v_j)\ne0$.
\item There exists $\dl_0>0$ such that, for each $i\in S_0$, there exists some $i'\in S_0$ satisfying
\begin{equation}\label{eq:C}
\bigl| |(Dv_i)(p_i)|-|(Dv_i)(p_{i'})|\bigr|\ge\dl_0.
\end{equation}
\end{enumerate}
\end{lemma}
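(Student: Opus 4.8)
The plan is to reduce all three assertions to linear algebra on the three-dimensional space $l(V_0)$ by exploiting the natural action of the symmetric group $S_3$ together with the two elementary identities $\sum_{i\in S_0}u_i=0$ (the rows of $D$ sum to $0$) and $(u_i,v_i)=1$. For $g\in S_3$ let $g$ also denote the linear isometry of $l(V_0)$ given by $(gx)(p_k)=x(p_{g^{-1}(k)})$. Since $D_{p,q}$ depends only on whether $p=q$, we have $gD=Dg$, so $g$ preserves $Q$ and $gu_i=u_{g(i)}$; and from the explicit vectors $\tilde v_i$ listed above one reads off $g\tilde v_i=\tilde v_{g(i)}$, whence $gv_i=v_{g(i)}$ after checking the normalizing constant via $(u_i,\tilde v_i)=(gu_i,g\tilde v_i)=(u_{g(i)},\tilde v_{g(i)})$. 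I would record these three equivariances at the outset.

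Granting them, (1) and (2) follow quickly. Transitivity of $S_3$ on $S_0$ and the $g$-invariance of $Q$ force $Q(v_1,v_1)=Q(v_2,v_2)=Q(v_3,v_3)$, and this common value is positive because $v_i$ is a nonzero multiple of the non-constant vector $\tilde v_i$ while $Q$ vanishes only on constant functions. For the second part of (1), if $i,i'\in S_0\setminus\{j\}$ are distinct, the transposition $g=(i\ i')$ fixes $j$, so $gv_j=v_j$, and $gD=Dg$ gives $(Dv_j)(p_i)=(g(Dv_j))(p_i)=(Dv_j)(p_{i'})$. For (2), by the equivariances $(u_i,v_j)$ depends only on whether $i=j$; writing $\beta$ for its value when $i\ne j$ and pairing $\sum_{i\in S_0}u_i=0$ with $v_j$ yields $(u_j,v_j)+2\beta=0$, i.e.\ $1+2\beta=0$, so $\beta=-\tfrac12\ne0$, while the diagonal case is $(u_i,v_i)=1$.

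For (3), fix $i$ and any $i'\in S_0\setminus\{i\}$. From $(\bfone,Dv_i)=(D\bfone,v_i)=0$ and the equality of $(Dv_i)$ at the two indices different from $i$ (part (1)) we obtain $(Dv_i)(p_i)=-2(Dv_i)(p_{i'})$; moreover $(Dv_i)(p_{i'})\ne0$, for otherwise $Dv_i=0$, i.e.\ $v_i\in\ker D=\R\bfone$ would be constant. Hence $\bigl||(Dv_i)(p_i)|-|(Dv_i)(p_{i'})|\bigr|=|(Dv_i)(p_{i'})|>0$, and by $gv_i=v_{g(i)}$ this number does not depend on $i$, so it serves as the required uniform $\dl_0$. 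The only point that is not pure formalism is the equivariance $g\tilde v_i=\tilde v_{g(i)}$: one may simply read it off the explicit list of eigenvectors, or, in the spirit of ``more general situations'', deduce it from the geometric symmetry of the family $\{\psi_i^{(\nu)}\}_{i\in S_0}$, which intertwines $A_i^{(\nu)}$ with $A_{g(i)}^{(\nu)}$ and therefore carries the simple $r^{(\nu)}$-eigenspace of $A_i^{(\nu)}$ onto that of $A_{g(i)}^{(\nu)}$. I anticipate no genuine obstacle beyond this.
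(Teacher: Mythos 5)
Your proposal is correct, and for parts (2) and (3) it is essentially the paper's own argument: both rest on the normalization $(u_j,v_j)=1$, the constancy of $(Dv_j)(p_i)$ over $i\ne j$ from part (1), and the identity $\sum_{i\in S_0}(u_i,v_j)=(D v_j,\bfone)=0$, yielding the off-diagonal value $-1/2$ and hence $\dl_0=1/2$. The one genuine difference is part (1): the paper simply cites \cite[Lemma~10]{HN06}, whereas you prove it directly from the $S_3$-equivariance $gD=Dg$, $gu_i=u_{g(i)}$, $g\tilde v_i=\tilde v_{g(i)}$ (all of which check out, since $D_{p,q}=1-3\delta_{p,q}$ commutes with permutations and the normalizing constants $(u_i,\tilde v_i)=2$ are permutation-invariant), together with the facts that $Q$ vanishes only on constants and $\ker D=\R\bfone$. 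This buys a self-contained proof at the cost of being tied to the specific symmetric form $Q$; the cited lemma of \cite{HN06} covers the ``more-general situations'' alluded to in the paper, where such a full symmetry group need not be available. Your version of (3) is also marginally more robust in that it derives $|(Dv_i)(p_i)|=2|(Dv_i)(p_{i'})|>0$ without computing the value $1$, and it delivers the conclusion for \emph{every} $i'\ne i$, consistent with the strengthening mentioned in the concluding remarks.
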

\begin{proof}
\begin{enumerate}
\item This is proved in \cite[Lemma~10]{HN06} in more-general situations.
\item Note that $(u_j,v_j)=1$. From (1), $(u_i,v_j)=(D v_j)(p_i)$ is independent of $i\in S_0\setminus\{j\}$. Moreover, $0=(D v_j,\bfone)=\sum_{i\in S_0}(u_i,v_j)$. Therefore, $(u_i,v_j)=-1/(\#S_0-1)=-1/2$ for $i\in S_0\setminus\{j\}$.
\item From the proof of (2), we can take $\dl_0=1/2$.
\qed
\end{enumerate}
\end{proof}
The following are simple estimates used in the proofs of Lemma~\ref{lem:key} and Theorem~\ref{th:main}.
\begin{lemma}\label{lem:simple}
Let $s,t>0$ and $a>0$. If $\left|\log (t/s)\right|\ge a$, then \[|t-s|\ge(1-e^{-a})\max\{s,t\}.\]
\end{lemma}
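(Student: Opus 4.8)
The plan is to reduce to a single case by symmetry and then unwind the definitions directly. Note first that the hypothesis $|\log(t/s)|\ge a>0$ forces $t\ne s$, so we may assume without loss of generality that $t>s$; the case $s>t$ is handled by interchanging the roles of $s$ and $t$, which leaves both $|\log(t/s)|$ and $|t-s|$ and $\max\{s,t\}$ unchanged.

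Under the assumption $t>s$ we have $\log(t/s)>0$, so the hypothesis reads $\log(t/s)\ge a$, i.e. $t/s\ge e^{a}$, equivalently $s\le e^{-a}t$. Then
\[
|t-s|=t-s\ge t-e^{-a}t=(1-e^{-a})t=(1-e^{-a})\max\{s,t\},
\]
which is the desired inequality.

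There is essentially no obstacle here: the statement is a one-line consequence of exponentiating the hypothesis and bounding $s$ from above. The only point requiring a word of care is the preliminary reduction, namely observing that $a>0$ rules out $s=t$ and that the claimed inequality is symmetric in $s$ and $t$, so it suffices to treat $t>s$.
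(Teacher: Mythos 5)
Your proof is correct and follows essentially the same route as the paper's: reduce by symmetry to the case $t\ge s$, exponentiate the hypothesis to get $t/s\ge e^{a}$, and conclude $t-s\ge t(1-e^{-a})$. The only difference is that you spell out the symmetry reduction that the paper leaves implicit.
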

\begin{proof}
We may assume that $s\le t$. Then,
$t/s\ge e^a$, which implies $t-s\ge t-t e^{-a}=t(1-e^{-a})$.
\qed
\end{proof}
\begin{lemma}\label{lem:simple2}
Let $d\in\N$ and
\[
  \cP_d=\left\{a=(a_1,\dots,a_d)\in \R^d\;\middle|\; a_k\ge0 \text{ for all $k=1,\dots,d$, and }\sum_{k=1}^d a_k=1\right\}.
\]
For $a=(a_1,\dots,a_d),\ b=(b_1,\dots,b_d)\in \cP_d$, it holds that
\[
\sum_{k=1}^d\sqrt{a_k b_k}\le 1-\frac{|a-b|_{\R^d}^2}8.
\]
\end{lemma}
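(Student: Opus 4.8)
The plan is to prove the inequality $\sum_{k=1}^d\sqrt{a_kb_k}\le 1-\frac18|a-b|_{\R^d}^2$ by reducing it to an elementary bound on each coordinate and then summing. First I would observe that since $a,b\in\cP_d$, the quantity $1-\sum_k\sqrt{a_kb_k}$ can be rewritten using $\sum_k a_k=\sum_k b_k=1$ as
\[
1-\sum_{k=1}^d\sqrt{a_kb_k}=\sum_{k=1}^d\Bigl(\tfrac{a_k+b_k}2-\sqrt{a_kb_k}\Bigr)=\frac12\sum_{k=1}^d\bigl(\sqrt{a_k}-\sqrt{b_k}\bigr)^2,
\]
the familiar ``Hellinger'' identity. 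So the claim is equivalent to
\[
\sum_{k=1}^d\bigl(\sqrt{a_k}-\sqrt{b_k}\bigr)^2\ge\frac14\sum_{k=1}^d(a_k-b_k)^2.
\]

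The key step is then the pointwise estimate: for all $x,y\ge0$,
\[
\bigl(\sqrt x-\sqrt y\bigr)^2\ge\frac14(x-y)^2\qu



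\text{whenever }x+y\le\text{(something)},
\]
but a clean universal version holds because $|x-y|=|\sqrt x-\sqrt y|\,(\sqrt x+\sqrt y)$, so $(x-y)^2=(\sqrt x-\sqrt y)^2(\sqrt x+\sqrt y)^2\le(\sqrt x-\sqrt y)^2\cdot 2(x+y)$. Therefore
\[
\sum_{k=1}^d(a_k-b_k)^2\le 2\sum_{k=1}^d(\sqrt{a_k}-\sqrt{b_k})^2(a_k+b_k)\le 4\sum_{k=1}^d(\sqrt{a_k}-\sqrt{b_k})^2,
\]
using $a_k+b_k\le\sum_j a_j+\sum_j b_j=2$ in the last step. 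Combining this with the Hellinger identity gives exactly $\sum_k\sqrt{a_kb_k}\le 1-\frac18|a-b|_{\R^d}^2$.

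I do not anticipate a serious obstacle here; the only point requiring a little care is making sure the bound $a_k+b_k\le2$ is applied correctly (it follows from nonnegativity of all entries together with the normalization $\sum a_j=\sum b_j=1$), and handling the degenerate case $a_k=b_k=0$ in the factorization $|a_k-b_k|=|\sqrt{a_k}-\sqrt{b_k}|(\sqrt{a_k}+\sqrt{b_k})$, which is trivially valid as $0=0$. The whole argument is a short chain of elementary inequalities, so I would present it essentially in the three displays above.
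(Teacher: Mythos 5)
Your proof is correct, and it is essentially the paper's argument: both rest on the identity $\tfrac{a_k+b_k}{2}-\sqrt{a_kb_k}=\tfrac{(a_k-b_k)^2}{2(\sqrt{a_k}+\sqrt{b_k})^2}$ together with the bound $(\sqrt{a_k}+\sqrt{b_k})^2\le 4$ (which you obtain via $2(a_k+b_k)\le 4$ and the paper obtains directly from $a_k,b_k\le 1$), followed by summation over $k$. Your reorganization through the Hellinger-sum identity changes nothing of substance, so no further comment is needed.
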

\begin{proof}
Since all $a_k$ and $b_k$ are dominated by $1$,
\begin{align*}
\sqrt{a_k b_k}&=\frac{a_k+b_k}2-\frac{(a_k-b_k)^2}{2(\sqrt{a_k}+\sqrt{b_k})^2}\\
&\le\frac{a_k+b_k}2-\frac{(a_k-b_k)^2}{8}.
\end{align*}
Taking the sum with respect to $k$ on both sides, we arrive at the conclusion.
\qed
\end{proof}

At the end of this section, we introduce a general sufficient condition for singularity of two measures. For $z\in\R$, let
\[
  z^\oplus=\begin{cases}
  1/z &(z\ne0)\\
  0& (z=0).
 \end{cases}
\]
\begin{theorem}\label{th:criterion}
Let $(\Omega,\cB,\{\cB_n\}_{n\in\Z_+})$ be a measurable space equipped with a filtration such that $\cB=\bigvee_{n\in\Z_+}\cB_n$. Let $P_1$ and $P_2$ be two probability measures on $(\Omega,\cB)$. Suppose that, for each $n\in\Z_+$, $P_2|_{\cB_n}$ is absolutely continuous with respect to $P_1|_{\cB_n}$. Let $z_n$ be the Radon--Nikodym derivative $d(P_2|_{\cB_n})/d(P_1|_{\cB_n})$ for $n\in\Z_+$ and $\a_n=z_{n}z_{n-2}^\oplus$ for $n\ge2$.
If
\begin{equation}\label{eq:criterion}
\sum_{n=2}^\infty \left(1-\E^{P_1}[\sqrt{\a_{n}}\mid \cB_{n-2}]\right)=\infty\quad P_1\text{-a.s.}
\end{equation}
holds, then $P_1$ and $P_2$ are mutually singular. Here, $\E^{P_1}[{}\mathrel{\cdot}|\cB_{n-2}]$ denotes the conditional expectation for $P_1$ given $\cB_{n-2}$.
\end{theorem}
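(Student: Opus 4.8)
The plan is to establish mutual singularity via the classical martingale criterion for product-type decompositions, adapted to the two-step skipping device encoded in $\a_n = z_n z_{n-2}^\oplus$. First I would recall the basic dichotomy: since each $P_2|_{\cB_n}\ll P_1|_{\cB_n}$, the sequence $(z_n)_{n\in\Z_+}$ is a nonnegative $P_1$-martingale with respect to $(\cB_n)$, hence converges $P_1$-a.s.\ to some limit $z_\infty$, and $P_1\perp P_2$ if and only if $z_\infty = 0$ $P_1$-a.s. (Indeed $z_\infty\,dP_1$ is the absolutely continuous part of $P_2$ with respect to $P_1$ in the Lebesgue decomposition, a standard fact for martingales generated by a filtration with $\cB=\bigvee_n\cB_n$.) So the whole task reduces to showing $z_\infty = 0$ $P_1$-a.s.\ under \eqref{eq:criterion}.

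Next I would pass to the even-indexed subsequence $w_k := z_{2k}$, which is itself a $P_1$-martingale with respect to $\cG_k := \cB_{2k}$ and has the same a.s.\ limit $z_\infty$. On the event $\{z_\infty > 0\}$ we have $z_n > 0$ eventually $P_1$-a.s., so $\a_n$ is eventually genuinely $z_n/z_{n-2}$ and the telescoping identity $w_k = z_0\prod_{j=1}^{k}\a_{2j}$ holds for large $k$. The point of taking square roots is the elementary inequality: for a nonnegative martingale increment, $\E^{P_1}[\sqrt{\a_n}\mid\cB_{n-2}]\le 1$ by Jensen (since $\E^{P_1}[\a_n\mid\cB_{n-2}] = z_{n-2}^\oplus\,\E^{P_1}[z_n\mid\cB_{n-2}] = z_{n-2}^\oplus z_{n-2} \le 1$, with equality on $\{z_{n-2}>0\}$), so $1 - \E^{P_1}[\sqrt{\a_n}\mid\cB_{n-2}]\ge 0$ and the series in \eqref{eq:criterion} is a sum of nonnegative terms. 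I would then consider $M_k := \prod_{j=1}^{k}\bigl(\sqrt{\a_{2j}}\big/\E^{P_1}[\sqrt{\a_{2j}}\mid\cB_{2j-2}]\bigr)$, which is a nonnegative $P_1$-martingale (wherever the conditional expectations are positive) and hence converges $P_1$-a.s.\ to a finite limit; consequently $\sqrt{w_k/z_0} = \prod_{j\le k}\sqrt{\a_{2j}}$ and $M_k$ differ by the factor $\prod_{j\le k}\E^{P_1}[\sqrt{\a_{2j}}\mid\cB_{2j-2}]$, and the latter product tends to $0$ on the set where \eqref{eq:criterion} holds because $\sum_j\bigl(1-\E^{P_1}[\sqrt{\a_{2j}}\mid\cB_{2j-2}]\bigr)$ diverges there (using $1-x\le e^{-(1-x)}$ for $x\in[0,1]$). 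Combining, $\sqrt{w_k}\to 0$, so $z_\infty = 0$, $P_1$-a.s.\ on the event where \eqref{eq:criterion} holds; but that event has full $P_1$-measure by hypothesis, so $z_\infty = 0$ $P_1$-a.s.\ and $P_1\perp P_2$.

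There is one gap to paper over carefully: the argument above cleanly handles the even-indexed series $\sum_j(1 - \E^{P_1}[\sqrt{\a_{2j}}\mid\cB_{2j-2}])$, whereas \eqref{eq:criterion} is the full series over all $n\ge 2$, which also includes odd $n$. The clean fix is to run the identical martingale argument in parallel for the odd subsequence $z_1, z_3, z_5,\dots$ with filtration $\cB_1\subset\cB_3\subset\cdots$ (noting $\cB = \bigvee_k\cB_{2k+1}$ as well, since the filtration is increasing), obtaining that $\sum_j(1-\E^{P_1}[\sqrt{\a_{2j+1}}\mid\cB_{2j-1}]) = \infty$ $P_1$-a.s.\ forces $z_\infty=0$ along the odd subsequence too. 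Since the full series diverges $P_1$-a.s., at least one of the two subseries diverges on a set of positive $P_1$-measure — in fact, by a zero-one type argument or simply because the limit $z_\infty$ is the same along both subsequences, it suffices that the full series diverges a.s.\ to conclude $z_\infty = 0$ a.s.\ from whichever subseries carries the divergence. I would phrase this as: partition $\Omega$ (up to $P_1$-null sets) according to which of the two subseries diverges, and on each piece apply the corresponding subsequence argument.

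The main obstacle I anticipate is not the martingale machinery — that is entirely standard — but the bookkeeping around the $z^\oplus$ convention on the $P_1$-null-but-not-$P_2$-null behavior and around the event $\{z_{n-2} = 0\}$: one must check that the various products and ratios are well-defined $P_1$-a.s.\ on the relevant events, that $M_k$ is genuinely a martingale (the denominators $\E^{P_1}[\sqrt{\a_{2j}}\mid\cB_{2j-2}]$ can vanish, but only on $\{z_{2j-2}=0\}$, which is eventually avoided $P_1$-a.s.\ on $\{z_\infty>0\}$, the only event that matters), and that the Lebesgue-decomposition identification of $z_\infty$ is legitimate in this generality. Once those measure-theoretic technicalities are handled, the inequality $1-x\le e^{-(1-x)}$ together with the divergence hypothesis does all the real work.
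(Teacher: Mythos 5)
Your proposal is correct and follows essentially the same route as the paper: both split the hypothesis into the even- and odd-indexed subseries to accommodate the two-step ratios $\a_n=z_nz_{n-2}^\oplus$, and both run the Hellinger-type square-root martingale argument of Shiryaev's Theorem VII.6.4 along the two filtrations $\{\cB_{2k}\}_{k}$ and $\{\cB_{2k+1}\}_{k}$, starting from the martingale convergence and Lebesgue decomposition of Theorem VII.6.1. The only difference is cosmetic: you conclude $z_\infty=0$ $P_1$-a.s.\ directly on each divergence set, whereas the paper concludes $z_\infty=\infty$ $P_2$-a.s.\ (and therefore needs one extra step to handle the $P_1$-null set $\Omega\setminus(Z_1\cup Z_2)$ on the $P_2$ side); the two conclusions are equivalent via the decomposition $P_2(A)=\int_A z_\infty\,dP_1+P_2(A\cap\{z_\infty=\infty\})$ that both arguments invoke, and the measure-theoretic technicalities you flag (the $0/0$ convention in $M_k$ and the restriction to $\{z_\infty>0\}$, where all $z_n>0$) are genuine but routine.
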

\begin{proof}
We modify the proof of \cite[Theorem~4.1]{Hi05}.
By \cite[Theorem~VII.6.1]{Sh}, $z_\infty:=\lim_{n\to\infty}z_n$ exists $(P_1+P_2)$-a.e.\ and
\begin{equation}\label{eq:decomp}
P_2(A)=\int_A z_\infty\,d P_1+P_2(A\cap\{z_\infty=\infty\}),\quad
A\in\cB.
\end{equation}
Moreover, $P_1$ and $P_2({}\cdot{}\mathrel{\cap}\{z_\infty=\infty\})$ are mutually singular.

Let
\begin{align*}
Z_1&=\left\{\sum_{k=1}^\infty \bigl(1-\E^{P_1}[\sqrt{\a_{2k}}\mid \cB_{2(k-1)}]\bigr)=\infty\right\},\\
Z_2&=\left\{\sum_{k=1}^\infty \bigl(1-\E^{P_1}[\sqrt{\a_{2k+1}}\mid \cB_{2(k-1)+1}]\bigr)=\infty\right\}.
\end{align*}
From \eqref{eq:criterion}, $P_1(Z_1\cup Z_2)=1$.
Considering the two filtrations $\{\cB_{2k}\}_{k\in\Z_+}$ and $\{\cB_{2k+1}\}_{k\in\Z_+}$ and following the proof of \cite[Theorem~VII.6.4]{Sh}, we have $\{z_\infty=\infty\}=Z_1=Z_2$ up to $P_2$-null sets.
Therefore, $z_\infty=\infty$ $P_2$-a.e.\ on $Z_1\cup Z_2$.
Applying \eqref{eq:decomp} to $A=\Omega\setminus(Z_1\cup Z_2)$, which is a $P_1$-null set, we have $P_2(A)=P_2(A\cap\{z_\infty=\infty\})$, that is, $z_\infty=\infty$ $P_2$-a.e.\ on $A$.
Thus, $P_2(z_\infty=\infty)=1$ and we conclude that $P_1$ and $P_2$ are mutually singular.
\qed
\end{proof}
\section{Proof of the main results}
We introduce some notation.
Let $\cK$ be a closed set of $l(V_0)$ that is defined as
\[
\cK=\{x\in l(V_0)\mid 2Q(x,x)=1\}.
\]
For $l_0\in\Z_+$ and $l_1,l_2\in\N$, let
\[
L(l_0,l_1,l_2)=\left\{\bfnu=\{\nu_k\}_{k=1}^\infty\in T^\N\;\middle|\;\begin{array}{l}\{\nu_k\mid l_0+1\le k\le l_0+l_1\}\\\subset \{\nu_k\mid  l_0+l_1+1\le k\le l_0+l_1+l_2\}\end{array}\right\}.
\]
We define several constants as follows:
\begin{align*}
\b_1&:=\min\{|(u_i,v_j)|\mid i,j\in S_0\}
=\min\{|(D v_i)(p)|\mid i\in S_0,\ p\in V_0\},\\
\b_2&:=\min\left\{\left|\log(r_v/q_v)\right|\mid v\in S,\ r_v\ne q_v\right\}>0,\\
\b_3&:=\min\{q_v\mid v\in S\}>0,\\
\b_4&:=\min\{r^{(\nu)}\mid \nu\in T\}>0,\\
\b_5&:=2Q(v_i,v_i)>0 \quad(i\in S_0).
\end{align*}
By Lemma~\ref{lem:C}(2), $\b_1>0$. In the definition of $\b_2$, $\min\emptyset=1$ by convention.
By Lemma~\ref{lem:C}(1), $\b_5$ is independent of the choice of $i$.

We fix $q\in\cA$.
The following is a key lemma for proving Theorem~\ref{th:main}.
\begin{lemma}\label{lem:key}
\begin{enumerate}
\item There exist $N\in\N$ and $N'\in \N$ such that, for any $l\in\N$, there exists $\gm>0$ satisfying the following. For all $\bfnu=\{\nu_k\}_{k=1}^\infty\in L(N,N',l)$ and $x\in \cK$, there exist 
\begin{align*}
i&=i(x)\in S_0,\\
j&=j(x,\nu_1,\nu_2,\dots, \nu_N)\in S_0,\\
m&=m(l,x,\nu_1,\nu_2,\dots, \nu_{N+N'+l})\in\{N',N'+1,\dots, N'+l\}
\end{align*}
such that 
\[
|2r_\xi^{-1}Q(A_\xi x)-q_\xi|\ge\gm
\]
with $\xi=i^{\nu_1}\cdots i^{\nu_N} j^{\nu_{N+1}}\cdots j^{\nu_{N+m}}$.
Here, ``$j=j(x,\nu_1,\nu_2,\dots, \nu_N)$'' means that ``$j$ depends only on $x,\nu_1,\nu_2,\dots, \nu_N$,'' and so on.
\item If Condition~\textup{(A)} holds, then the claim of item~{\rm(1)} holds with ``$\bfnu=\{\nu_k\}_{k=1}^\infty\in L(N,N',l)$'' replaced by ``$\bfnu=\{\nu_k\}_{k=1}^\infty\in T^\N$.''
\end{enumerate}
\end{lemma}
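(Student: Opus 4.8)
The plan is to exploit the asymptotic spectral analysis of Lemma~\ref{lem:A} together with the non-degeneracy facts of Lemma~\ref{lem:C}. Fix $x\in\cK$. First I would use Lemma~\ref{lem:A}: along the "pure" word $i^{\nu_1}\cdots i^{\nu_n}$ one has $r_{i^{\nu_1}\cdots i^{\nu_n}}^{-2}Q(A_{i^{\nu_1}\cdots i^{\nu_n}}x)\to(u_i,x)^2Q(v_i)$ uniformly in $i\in S_0$, $x\in\cK$, $\bfnu\in T^\N$ (since $\cK\subset P^{-1}(\text{compact})$). The first observation is that for \emph{some} $i=i(x)\in S_0$ we must have $(u_i,x)\ne0$ — indeed if $(u_i,x)=0$ for all $i\in S_0$ then, since $\{u_1,u_2\}$ spans $\tilde l(V_0)$ and $x\in\cK$ forces $Q(x,x)=1/2>0$ hence $Px\ne0$, we get a contradiction; a compactness argument on $\cK$ then gives a uniform lower bound $|(u_{i(x)},x)|\ge c_0>0$. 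Choosing $N$ large (uniformly in $x,\bfnu$, by the uniform convergence) we can guarantee that, with $y:=r_{i^{\nu_1}\cdots i^{\nu_N}}^{-1}A_{i^{\nu_1}\cdots i^{\nu_N}}x$, the quantity $2Q(y,y)$ is within a small prescribed tolerance of $2(u_i,x)^2Q(v_i,v_i)$ — in particular bounded away from $0$ and from $\infty$ — and moreover $Py$ is within a small tolerance of $(u_i,x)Pv_i$, so $y$ is close (mod constants) to a nonzero multiple of $v_i$.

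Next I would run the same argument a second time, now starting from $y$ and appending a pure block $j^{\nu_{N+1}}\cdots j^{\nu_{N+m}}$. The point of Lemma~\ref{lem:C}(3) is that for the relevant direction $v_i$ there is a vertex $p_{i'}$ at which $|(Dv_i)(p_i)|$ and $|(Dv_i)(p_{i'})|$ differ by at least $\dl_0$; equivalently $|(u_i,v_i)|=1$ while $|(u_{i'},v_i)|=1/2$ (Lemma~\ref{lem:C}(2) and its proof). So I would pick $j:=i'$ (depending only on $i(x)$, hence only on $x$ — the $\nu$-dependence in the statement is harmless slack), and apply Lemma~\ref{lem:A} again with this $j$: for $N'$ large and any $m\ge N'$, $2r_{\xi}^{-1}Q(A_\xi x,A_\xi x)$ with $\xi=i^{\nu_1}\cdots i^{\nu_N}j^{\nu_{N+1}}\cdots j^{\nu_{N+m}}$ is close to $2(u_j,y)^2Q(v_j,v_j)=2(u_j,v_i)^2\cdot(\text{scalar})\cdot\b_5$, a fixed positive number $\kappa$ determined by $x$ (through $i$) but bounded above and below uniformly. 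The conclusion $|2r_\xi^{-1}Q(A_\xi x)-q_\xi|\ge\gm$ then comes from a pigeonhole-in-$m$ argument: the "resistance factor" $r_\xi=r_{i^{\nu_1}\cdots i^{\nu_N}}\cdot r_{j^{\nu_{N+1}}}\cdots r_{j^{\nu_{N+m}}}$ and the "measure weight" $q_\xi=q_{i^{\nu_1}\cdots i^{\nu_N}}\cdot q_{j^{\nu_{N+1}}}\cdots q_{j^{\nu_{N+m}}}$ scale differently in $m$ unless $r_{j^\nu}=q_{j^\nu}$ for every $\nu$ that actually occurs among $\nu_{N+N'+1},\dots,\nu_{N+N'+l}$ — and this is exactly where the membership $\bfnu\in L(N,N',l)$ is used, to ensure that the set of symbols appearing in positions $N+1,\dots,N+N'$ (where we genuinely need control) is reproduced in positions $N+N'+1,\dots,N+N'+l$, giving us room to move $m$ over a window of length $l$. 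Concretely, as $m$ ranges over $\{N',\dots,N'+l\}$, either $q_\xi$ escapes the (fixed, $m$-independent up to the controlled factor $2r_\xi^{-1}Q(A_\xi x)\approx\kappa$) target value by a factor bounded away from $1$ — because multiplying by $q_{j^\nu}\le 1-\b_3<1$ or by $q_{j^\nu}$ vs $r_{j^\nu}$ with $|\log(r_{j^\nu}/q_{j^\nu})|\ge\b_2$ shifts it — and then Lemma~\ref{lem:simple} converts the log-gap into the desired additive gap $\gm$; the constant $\gm>0$ depends on $l$ (and on $N,N',\b_1,\dots,\b_5,c_0$) but not on $x$ or $\bfnu$.

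For part~(2): if Condition~(A) holds, i.e.\ $q_{i^\nu}\ne r^{(\nu)}$ for all $i\in S_0$ and $\nu\in T$, then the escape mechanism above needs no reproduction hypothesis at all — we can already get a multiplicative gap by appending a single extra letter $j^{\nu_{N+N'+1}}$, since $r_{j^{\nu}}\ne q_{j^{\nu}}$ automatically and $|\log(r_{j^\nu}/q_{j^\nu})|\ge\b_2$; so the same two-stage argument runs verbatim for arbitrary $\bfnu\in T^\N$, with $m\in\{N',N'+1\}$ say, and the constraint $\bfnu\in L(N,N',l)$ is dropped. The main obstacle I expect is the bookkeeping in the pigeonhole step of part~(1): one has to arrange the window $\{N',\dots,N'+l\}$ and the choice of $m$ so that (a) $m$ genuinely depends only on the listed data $l,x,\nu_1,\dots,\nu_{N+N'+l}$, (b) the approximations from Lemma~\ref{lem:A} are uniform over the whole window (which is why $N'$ must be chosen before $l$, but in fact $N'$ can be taken uniformly since the convergence rate $\theta^n$ in Lemma~\ref{lem:A} is uniform), and (c) the "target" $2r_\xi^{-1}Q(A_\xi x)$ is itself $m$-dependent (it drifts toward $\kappa$ as $m$ grows), so one must show the \emph{ratio} of this slowly-varying target to the geometrically-varying weight $q_\xi$ cannot stay near $1$ across $l+1$ consecutive values of $m$ — this is a clean argument but requires care to make all constants independent of $x$ and $\bfnu$.
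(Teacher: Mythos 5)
Your skeleton matches the paper's: locate $i=i(x)$ with $(u_i,x)^2$ bounded below by compactness of $\cK$, run Lemma~\ref{lem:A} along a pure $i$-block of length $N$, then along a pure $j$-block, and split according to whether some letter in the window $\{N'+1,\dots,N'+l\}$ has $r_{j^{\nu_{N+k}}}\ne q_{j^{\nu_{N+k}}}$ (where a log-jump of size $\b_2$ plus Lemma~\ref{lem:simple} finishes) or not. But there is a genuine gap in the second case, and it is located exactly where you fix $j:=i'$ once and for all and call the allowed dependence of $j$ on $\nu_1,\dots,\nu_N$ ``harmless slack.'' If $r_{j^{\nu_{N+k}}}=q_{j^{\nu_{N+k}}}$ for all $k$ in the window, then (using $\bfnu\in L(N,N',l)$) the same holds for $k=1,\dots,N'$, so with $w=i^{\nu_1}\cdots i^{\nu_N}$ and $\hat w=j^{\nu_{N+1}}\cdots j^{\nu_{N+N'}}$ one has $q_{w\hat w}/r_{w\hat w}=q_w/r_w$, and the ratio $2r_\xi^{-1}Q(A_\xi x)/q_\xi$ is essentially \emph{constant} in $m$ over the whole window, approximately equal to $2Q(v_j)(u_j,x_N)^2\,r_w/q_w$. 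Nothing you have said prevents this constant from equalling $1$ for some $x\in\cK$ (as $x$ varies, $(u_{i'},x_N)^2$ sweeps an interval and can hit $q_w/(2Q(v_{i'})r_w)$ exactly), so your claim that the ratio ``cannot stay near $1$ across $l+1$ consecutive values of $m$'' is false in this case, and no uniform $\gm>0$ results. You flag this as ``the main obstacle'' but your choice of a $\bfnu$-independent $j$ forecloses the only available repair.

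The paper's resolution is the point you are missing: Lemma~\ref{lem:C}(3) is not used to guarantee non-degeneracy of the second block (that is already supplied for \emph{every} $j\in S_0$ by $\b_1>0$ from Lemma~\ref{lem:C}(2), giving $|(u_j,x_N)|\ge\dl_1$), but to show that the two candidate limits $2Q(v_i)(u_i,x_N)^2$ and $2Q(v_{i'})(u_{i'},x_N)^2$ are separated by at least $2\dl_3$, via $\bigl|(Dx_N)(p_i)^2-(Dx_N)(p_{i'})^2\bigr|\ge\dl_2$. Since they are $2\dl_3$ apart, at least one of them lies at distance $\ge\dl_3$ from the single target number $q_w/r_w$, and $j$ is taken to be whichever of $i,i'$ achieves this; that choice genuinely depends on $\nu_1,\dots,\nu_N$ through $q_w/r_w$, which is precisely why the statement allows $j=j(x,\nu_1,\dots,\nu_N)$. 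With this fix, Case~II closes as in the paper ($|2r_{w\hat w}^{-1}Q(A_{w\hat w}x)-q_{w\hat w}|\ge\dl_3\b_4^{N+N'}/2$). Your part~(2) is fine as stated, since under Condition~(A) the problematic case never occurs and the single-letter log-jump argument suffices for arbitrary $\bfnu\in T^\N$.
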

\begin{proof}
(1) Let $\ph$ be a continuous function on $l(V_0)$ that is defined as
\[
\ph(x)=\sum_{i\in S_0}(u_i,x)^2.
\]
Since the range of $\ph$ on $\cK$ is equal to that on a compact set $P(\cK)$, $\ph$ attains a minimum on $\cK$, say $\b_6$.
Let $x\in\cK$. Because 
\[
0<Q(x,x)=(-Dx,x)=-\sum_{i\in S_0} (u_i,x)x(p_i),
\]
$(u_i,x)\ne0$ for some $i\in S_0$. This implies that $\ph(x)>0$. (In fact, we can confirm that $\ph(x)\equiv 3/2$.) Thus, $\b_6>0$.
Define $\dl'=\b_6/\#S_0=\b_6/3$ and $\cK_i=\{x\in\cK\mid (u_i,x)^2\ge\dl'\}$ for $i\in S_0$. It holds that $\cK=\bigcup_{i\in S_0}\cK_i$.

We fix $x\in \cK$. There exists $i\in S_0$ such that $x\in \cK_i$.
From Lemma~\ref{lem:C}(3), there exists $i'\in S_0$ such that \eqref{eq:C} holds. By keeping in mind that $(Dv_i)(p_i)=1$, it follows that
\begin{align}
\bigl| (Dv_i)(p_i)^2-(Dv_i)(p_{i'})^2\bigr|
&=\bigl| 1+|(Dv_i)(p_{i'})|\bigr|\,\bigl| |(Dv_i)(p_i)|-|(Dv_i)(p_{i'})|\bigr|\nonumber\\
&\ge \dl_0.\label{eq:dl_0}
\end{align}
Let $\bfnu=\{\nu_k\}_{k\in\N}\in T^\N$ and define $x_n=r_{i^{\nu_1}\cdots i^{\nu_n}}^{-1}A_{i^{\nu_1}\cdots i^{\nu_n}}x$ for $n\in\N$.
From Lemma~\ref{lem:u},
\begin{equation}\label{eq:key0}
 (u_i,x_n)=(r_{i^{\nu_1}\cdots i^{\nu_n}}^{-1}{}^t\!A_{i^{\nu_1}\cdots i^{\nu_n}} u_i,x)=(u_i,x).
\end{equation}
Let $\dl_1=\sqrt{\dl'}\b_1/2$ and $\dl_2=\dl'\dl_0/3$.
By Lemma~\ref{lem:A}, there exists $N\in\N$ independent of the choice of $x$, $i$, and $\bfnu$ such that, for all $p\in V_0$,
\begin{align}
\bigl| |(Dx_N)(p)|-|(u_i,x)(Dv_i)(p)|\bigr|&\le \dl_1\label{eq:key1}\\
\shortintertext{and}
\bigl| (Dx_N)(p)^2-(u_i,x)^2(Dv_i)(p)^2\bigr|&\le\dl_2.\label{eq:key2}
\end{align}
From \eqref{eq:key0} and \eqref{eq:key1}, for any $j\in S_0$,
\begin{align*}
|(u_j,x_N)|&=|(Dx_N)(p_j)|\\
&\ge |(u_i,x)(Dv_i)(p_j)|-\dl_1\\
&\ge \sqrt{\dl'}\b_1-\dl_1=\dl_1.
\end{align*}
By Lemma~\ref{lem:A},
\begin{align*}
  \lim_{m\to\infty}r_{j^{\nu_{N+1}}\cdots j^{\nu_{N+m}}}^{-2} Q(A_{j^{\nu_{N+1}}\cdots j^{\nu_{N+m}}}x_N)
  &=(u_j,x_N)^2 Q(v_j)\\
  &\ge \dl_1^2 \b_5/2>0.
\end{align*}
This convergence is uniform in $x$, $i$, $j$, and $\bfnu$ because $P x_N$ belongs to some compact set of $\cK$ that is independent of them. 
We take $\dl_3=\b_5\dl_2/2$.
Then, there exists $N'\in\N$ independent of $x$, $i$, $j$, and $\bfnu$ such that, for every $n\ge N'$,
\begin{equation}\label{eq:conv}
\bigl|r_{j^{\nu_{N+1}}\cdots j^{\nu_{N+n}}}^{-2} Q(A_{j^{\nu_{N+1}}\cdots j^{\nu_{N+n}}}x_N)
-(u_j,x_N)^2 Q(v_j)\bigr|\le \dl_3/4
\end{equation}
and
\begin{equation}\label{eq:log}
\left|\log\frac{r_{j^{\nu_{N+1}}\cdots j^{\nu_{N+n-1}}}^{-2}Q(A_{j^{\nu_{N+1}}\cdots j^{\nu_{N+n-1}}}x_N)}{r_{j^{\nu_{N+1}}\cdots j^{\nu_{N+n}}}^{-2}Q(A_{j^{\nu_{N+1}}\cdots j^{\nu_{N+n}}}x_N)}\right|\le \frac{\b_2}{2}.
\end{equation}
From \eqref{eq:dl_0} and \eqref{eq:key2},
\begin{align*}
\dl'\dl_0
&\le (u_i,x)^2\bigl| (Dv_i)(p_i)^2-(Dv_i)(p_{i'})^2\bigr|\\
&\le \bigl| (u_i,x)^2(Dv_i)(p_i)^2-(Dx_N)(p_i)^2\bigr|+\bigl| (Dx_N)(p_i)^2-(Dx_N)(p_{i'})^2\bigr|\\
&\quad+\bigl| (Dx_N)(p_{i'})^2-(u_i,x)^2(Dv_i)(p_{i'})^2\bigr|\\
&\le 2\dl_2+\bigl| (Dx_N)(p_i)^2-(Dx_N)(p_{i'})^2\bigr|,
\end{align*}
which implies that
\[
\bigl| (Dx_N)(p_i)^2-(Dx_N)(p_{i'})^2\bigr|\ge \dl'\dl_0-2\dl_2=\dl_2.
\]
From the identity $(Dx_N)(p_j)=(u_j,x_N)$ $(j\in S_0)$, we have
\begin{align*}
 2\dl_3&=\b_5\dl_2\\
 &\le \b_5\bigl|(u_i,x_N)^2-(u_{i'},x_N)^2\bigr|\\
 &\le \bigl|2Q(v_i)(u_i,x_N)^2-q_{w}/r_{w}\bigr|+\bigl|2Q(v_{i'})(u_{i'},x_N)^2-q_{w}/r_{w}\bigr|,
\end{align*}
where we choose $w=i^{\nu_1}\cdots i^{\nu_N}\in W_N$.
Then, for either $j=i$ or $i'$,
\begin{equation}\label{eq:iori'}
\bigl|2Q(v_j)(u_j,x_N)^2-q_{w}/r_{w}\bigr|\ge \dl_3.
\end{equation}
We fix such $j$.
Take any $l\in \N$ and suppose $\bfnu\in L(N,N',l)$.
There are two possibilities:
\begin{enumerate}
\item[I)] There exists some $k\in\{N'+1,\dots,N'+l\}$ such that $r_{j^{\nu_{N+k}}}\ne q_{j^{\nu_{N+k}}}$.
\item[II)] $r_{j^{\nu_{N+k}}}= q_{j^{\nu_{N+k}}}$ for all $k\in\{N'+1,\dots,N'+l\}$. 
\end{enumerate}

Suppose Case I). Let $w'=j^{\nu_{N+1}}\cdots j^{\nu_{N+k-1}}\in W_{k-1}$.
From \eqref{eq:log} with $n=k$,
\begin{align*}
\frac{\b_2}2
&\ge\left|\log\left(r_{j^{\nu_{N+k}}}^2\times\frac{Q(A_{j^{\nu_{N+1}}\cdots j^{\nu_{N+k-1}}}x_N)}{Q(A_{j^{\nu_{N+1}}\cdots j^{\nu_{N+k}}}x_N)}\right)\right|\\
&= \left|\log\left(\frac{r_{j^{\nu_{N+k}}}}{q_{j^{\nu_{N+k}}}}\frac{2r_{w w'}^{-1}Q(A_{w w'} x)}{q_{w w'}}\frac{q_{w w' j^{\nu_{N+k}}}}{2r_{w w'j^{\nu_{N+k}}}^{-1}Q(A_{w w' j^{\nu_{N+k}}}x)}\right)\right|\\
&\ge \b_2-\left|\log\frac{2r_{w w'}^{-1}Q(A_{w w'} x)}{q_{w w'}}\right|-\left|\log\frac{q_{w w' j^{\nu_{N+k}}}}{2r_{w w'j^{\nu_{N+k}}}^{-1}Q(A_{w w' j^{\nu_{N+k}}}x)}\right|.
\end{align*}
Therefore, either
\[
\left|\log\frac{2r_{w w'}^{-1}Q(A_{w w'} x)}{q_{w w'}}\right|\ge\frac{\b_2}{4}
\quad\text{or}\quad
\left|\log\frac{q_{w w' j^{\nu_{N+k}}}}{2r_{w w'j^{\nu_{N+k}}}^{-1}Q(A_{w w' j^{\nu_{N+k}}}x)}\right|\ge\frac{\b_2}{4}
\]
holds.
Since $q_{w w'}\ge q_{w w' j^{\nu_{N+k}}}\ge \b_3^{N+N'+l}$, Lemma~\ref{lem:simple} implies that either
\begin{equation}\label{eq:I1}
|2r_{w w'}^{-1}Q(A_{w w'} x)-q_{w w'}|\ge (1-e^{-\b_2/4})\b_3^{N+N'+l}
\end{equation}
or
\begin{equation}\label{eq:I2}
|2r_{w w'j^{\nu_{N+k}}}^{-1}Q(A_{w w' j^{\nu_{N+k}}}x)-q_{w w' j^{\nu_{N+k}}}|\ge (1-e^{-\b_2/4})\b_3^{N+N'+l}
\end{equation}
holds.

Next, suppose Case II). 
Since $\bfnu\in L(N,N',l)$, $r_{j^{\nu_{N+k}}}= q_{j^{\nu_{N+k}}}$ for all $k\in\{1,\dots,N'\}$.
Let $\hat w=j^{\nu_{N+1}}\cdots j^{\nu_{N+N'}}\in W_{N'}$.
Note that $q_{\hat w}=r_{\hat w}$.
From \eqref{eq:iori'} and \eqref{eq:conv},
\begin{align*}
\dl_3&\le|2Q(v_j)(u_j,x_N)^2-q_{w}/r_{w}|\\
&\le|2Q(v_j)(u_j,x_N)^2-2r_{\hat w}^{-2}Q(A_{\hat w} x_N)|+|2r_{\hat w}^{-2}Q(A_{\hat w} x_N)-q_{w \hat w}/r_{w \hat w}|\\
&\le \dl_3/2+\b_4^{-(N+N')}|2r_{w \hat w}^{-1}Q(A_{w \hat w} x)-q_{w \hat w}|.
\end{align*}
Therefore, 
\[
|2r_{w \hat w}^{-1}Q(A_{w \hat w} x)-q_{w \hat w}|\ge \dl_3 \b_4^{N+N'}/2.
\]

In conclusion, it suffices to take
\[
m=\begin{cases}
k-1&\text{if \eqref{eq:I1} holds in Case I),}\\
k  &\text{if \eqref{eq:I1} fails to hold in Case I),}\\
N'& \text{in Case II)}
\end{cases}
\]
and
\[
\gm=\min\bigl\{(1-e^{-\b_2/4})\b_3^{N+N'+l},\, \dl_3 \b_4^{N+N'}/2\bigr\}.
\]
(2) In the proof of (1), the condition that $\bfnu\in L(N,N',l)$ is used only in the discussion of Case II). Under Condition~(A), Case II) never happens. Therefore, the arguments are valid for all $\bfnu\in T^\N$.
\qed
\end{proof}
\begin{proof}[of Theorem~\ref{th:main}]
Let $N$ and $N'$ be natural numbers that are provided in Lemma~\ref{lem:key}.
Under Condition~(B), take $l_2\in\N$ associated with $l_0=N$ and $l_1=N'$ in (B). 
Under Condition~(A), take $l_2=1$.

Let $M=N+N'+l_2$. For $n\in\Z_+$, let $\cB_n$ denote the $\sg$-field on $\Sg$ that is generated by $\{\Sg_w\mid w\in W_{Mn}\}$.
Then, $\bigvee_{n=0}^\infty \cB_n$ is equal to the Borel $\sg$-field on $\Sg$.

Take $x\in \cK$. We first prove that $\lm_{\la x\ra}$ and $\lm_q$ are mutually singular.
For each $n\in\Z_+$, $\lm_{\la x\ra}|_{\cB_n}$ is absolutely continuous with respect to $\lm_q|_{\cB_n}$. Indeed, if $\lm_q(\Sg_w)=0$ for $w\in W_{Mn}$, then $w\notin \tilde W_{Mn}$, which implies $\lm_{\la x\ra}(\Sg_w)=0$.
Let $z_n$ denote the Radon--Nikodym derivative $d(\lm_{\la x\ra}|_{\cB_n})/d(\lm_q|_{\cB_n})$.

Under Condition~(B), take $\om=\om_1\om_2\cdots\in\tilde\Sg$ such that Condition~($\star$) is satisfied, and let $k\in\Z_+$ in ($\star$). 
Under Condition~(A), take $\om\in\tilde\Sg$ and $k\in\Z_+$ arbitrarily.

There exists a unique natural number $n\ge 2$ such that $M(n-2)\le k< M(n-1)$.
Let $w:=[\om]_{M(n-2)}\in \tilde W_{M(n-2)}$ and $\xi\in W_{2M}$. Using \eqref{eq:lm}, we have
\[
z_{n-2}=\frac{\lm_{\la x\ra}(\Sg_w)}{\lm_q(\Sg_w)}=\frac{2r_w^{-1} Q(A_w x)}{q_w} \quad\text{on $\Sg_w$}
\]
and
\[
z_{n}=\begin{cases}
\dfrac{2r_{w \xi}^{-1} Q(A_{w \xi} x)}{q_{w \xi}} & \text{if }w \xi\in \tilde W_{Mn}\\
0&\text{if }w \xi\notin \tilde W_{Mn}
\end{cases}
 \quad\text{on $\Sg_{w \xi}$}.
\]
Then, on $\Sg_{w \xi}$,
\[
\a_{n}:= z_{n} z_{n-2}^\oplus
=\begin{cases}
\dfrac{Q(A_{w \xi} x)Q(A_w x)^\oplus}{q_{\xi}r_{\xi}} & \text{if }w\xi\in \tilde W_{Mn},\\
0&\text{if }w\xi \notin \tilde W_{Mn}.
\end{cases}
\]
If $Q(A_w x)=0$, then $\a_{n}=0$ on $\Sg_w$, which implies that 
\begin{equation}\label{eq:trivial}
1-\E^{\lm_q}[\sqrt{\a_{n}}\mid\cB_{n-2}](\om)=1.
\end{equation}
Suppose that $Q(A_w x)\ne0$. Let $x'=A_w x\big/\sqrt{2Q(A_w x)}\in\cK$. Then,
\begin{align}
\E^{\lm_q}[\sqrt{\a_{n}}\mid\cB_{n-2}](\om)
&=\sum_{\xi\in W_{2M};\ w \xi\in\tilde W_{Mn}}\frac{q_{w \xi}}{q_w}\sqrt{\frac{Q(A_{w \xi} x)}{q_{\xi}r_{\xi}Q(A_w x)}}\nonumber\\
&=\sum_{\xi\in W_{2M};\ w \xi\in\tilde W_{Mn}}\sqrt{q_{\xi}\times 2r_{\xi}^{-1}Q(A_{\xi} x')}\nonumber\\
&\le 1-\frac18\sum_{\xi\in W_{2M};\ w \xi\in\tilde W_{Mn}}\bigl(q_{\xi}-2r_{\xi}^{-1}Q(A_{\xi} x')\bigr)^2.\label{eq:root}
\end{align}
Here, the last inequality follows from Lemma~\ref{lem:simple2}.

Take $\gm>0$ in Lemma~\ref{lem:key} associated with $l=l_2$.
Let 
\[
w'=\om_{M(n-2)+1}\om_{M(n-2)+2}\cdots\om_{k}\in W_{k-M(n-2)}
\quad\text{($w'=\emptyset$ if $k=M(n-2)$)}
\]
and $\gm'=\min\{\gm,\b_3^M\}$. Note that $q_{w'}\ge \b_3^M\ge \gm'$.
We consider the following two cases:
\begin{enumerate}
\item[i)] $|q_{w'}-2r_{w'}^{-1} Q(A_{w'}x')|\ge\gm\gm'/3$;
\item[ii)]$|q_{w'}-2r_{w'}^{-1} Q(A_{w'}x')|<\gm\gm'/3$.
\end{enumerate}
Suppose Case i). Letting $I=\{\zt\in W_{Mn-k}\mid w w'\zt\in\tilde W_{Mn}\}$, we have
\begin{align*}
&\smashoperator[l]{\sum_{\xi\in W_{2M};\ w \xi\in\tilde W_{Mn}}}\bigl(q_{\xi}-2r_{\xi}^{-1}Q(A_{\xi} x')\bigr)^2\\
&\ge \sum_{\zt\in I}\bigl(q_{w'\zt}-2r_{w'\zt}^{-1}Q(A_{w'\zt} x')\bigr)^2\\
&\ge \Biggl\{\sum_{\zt\in I}\bigl(q_{w'\zt}-2r_{w'\zt}^{-1}Q(A_{w'\zt} x')\bigr)\Biggr\}^2\Biggl(\sum_{\zt\in I}1\Biggr)^{-1}\\
&=\left(q_{w'}-2r_{w'}^{-1}Q(A_{w'} x')\right)^2 \left(\#I\right)^{-1} \qquad\text{(from \eqref{eq:Q})}\\
&\ge (\gm\gm'/3)^2(\# S)^{-2M}.
\end{align*}
Next, suppose Case ii). We have
\begin{align}
2r_{w'}^{-1}Q(A_{w'} x')
&> q_{w'}-\gm\gm'/3\nonumber\\
&\ge \gm'-\gm'/3= 2\gm'/3.\label{eq:caseii}
\end{align}
In particular, $Q(A_{w'} x')\ne0$.
Let $x''=A_{w'}x'\big/\sqrt{2Q(A_{w'}x')}\in \cK$.
We make several choices in order as follows:
\begin{itemize}
\item Take $i\in S_0$ associated with $x''\in\cK$ in Lemma~\ref{lem:key}.
\item Take $\nu_{k+1},\nu_{k+2},\dots, \nu_{k+N}\in T$ such that $w w' i^{\nu_{k+1}}i^{\nu_{k+2}}\cdots i^{\nu_{k+N}}\in \tilde W_{k+N}$; these are uniquely determined.
\item Take $j\in S_0$ associated with $x''\in\cK$, $i\in S_0$, and $\{\nu_{k+s}\}_{s=1}^N$ in Lemma~\ref{lem:key}.
\item Take a unique sequence $\{\nu_{s}\}_{s=k+N+1}^\infty\subset T$ such that 
\[
w w' i^{\nu_{k+1}}i^{\nu_{k+2}}\cdots i^{\nu_{k+N}}j^{\nu_{k+N+1}}j^{\nu_{k+N+2}}\cdots j^{\nu_{k+N+t}}\in W_{k+N+t}
\]
for every $t\in\N$.
\item Take $m\in\{N',N'+1,\dots, N'+l_2\}$ associated with $x''\in\cK$, $i\in S_0$, $j\in S_0$, and $\{\nu_{k+s}\}_{s=1}^\infty$ in Lemma~\ref{lem:key}.
\end{itemize}
Note that $\{\nu_{k+s}\}_{s=1}^\infty\in L(N,N',l_2)$ under Condition~(B).

Let
\[
\eta=i^{\nu_{k+1}}i^{\nu_{k+2}}\cdots i^{\nu_{k+N}}j^{\nu_{k+N+1}}j^{\nu_{k+N+2}}\cdots j^{\nu_{k+N+m}}\in W_{N+m}.
\]
Then, letting $J=\{\eta'\in W_{Mn-k-N-m}\mid w w' \eta\eta'\in\tilde W_{Mn}\}$, we have
\begin{align*}
&\smashoperator[l]{\sum_{\xi\in W_{2M};\ w \xi\in\tilde W_{Mn}}}\bigl(q_{\xi}-2r_{\xi}^{-1}Q(A_{\xi} x')\bigr)^2\\
&\ge \sum_{\eta'\in J}\bigl(q_{w'\eta\eta'}-2r_{w'\eta\eta'}^{-1}Q(A_{w'\eta\eta'} x')\bigr)^2\\
&\ge \Biggl\{\sum_{\eta'\in J}\bigl(q_{w'\eta\eta'}-2r_{w'\eta\eta'}^{-1}Q(A_{w'\eta\eta'} x')\bigr)\Biggr\}^2\Biggl(\sum_{\eta'\in J}1\Biggr)^{-1}\\
&=\bigl(q_{w'\eta}-2r_{w'\eta}^{-1}Q(A_{w'\eta} x')\bigr)^2 \left(\# J\right)^{-1}\qquad\text{(from \eqref{eq:Q})}\\
&\ge \bigl(q_{w'\eta}-2r_{w'\eta}^{-1}Q(A_{w'\eta} x')\bigr)^2(\# S)^{-2M}.
\end{align*}
Moreover,
\begin{align*}
&\bigl|q_{w'\eta}-2r_{w'\eta}^{-1}Q(A_{w'\eta} x')\bigr|\\
&=\left|q_{w'\eta}-2r_{\eta}^{-1}Q(A_{\eta} x'')\cdot 2r_{w'}^{-1}Q(A_{w'} x')\right|\\
&\ge 2r_{w'}^{-1}Q(A_{w'} x')\left|q_\eta-2r_\eta^{-1}Q(A_{\eta} x'')\right|-\left|q_{w'}-2r_{w'}^{-1}Q(A_{w'} x')\right|q_\eta\\
&\ge \frac{2\gm'}3\cdot\gm-\frac{\gm\gm'}3\cdot1
=\gm\gm'/3.
\end{align*}
Here, in the last inequality, we used \eqref{eq:caseii} and Lemma~\ref{lem:key}.

Therefore, in both Case i) and Case ii),
\begin{equation}\label{eq:ge}
\sum_{\xi\in W_{2M};\ w \xi\in\tilde W_{Mn}}\bigl(q_{\xi}-2r_{\xi}^{-1}Q(A_{\xi} x')\bigr)^2\ge (\gm\gm'/3)^2(\# S)^{-2M}.
\end{equation}
By combining \eqref{eq:root} with \eqref{eq:ge},
\begin{equation}\label{eq:keyineq}
1-\E^{\lm_q}[\sqrt{\a_{n}}\mid\cB_{n-2}](\om)\ge {(\gm\gm')^2(\# S)^{-2M}}/{72}.
\end{equation}
For $\lm_q$-a.s.\,$\om$, there are infinitely many $n$ that satisfy \eqref{eq:trivial} or \eqref{eq:keyineq}; therefore,
\[
\sum_{n=2}^\infty \left(1-\E^{\lm_q}[\sqrt{\a_{n}}\mid\cB_{n-2}]\right)=\infty\quad \lm_q\text{-a.s.}
\]
From Theorem~\ref{th:criterion}, we conclude that $\lm_{\la x\ra}\perp\lm_q$.

Take a $\sg$-compact set $B$ of $\Sg$ such that $\lm_{\la x\ra}(B)=1$ and $\lm_q(B)=0$. Recall that 
\[
V_*\setminus V_0=\left\{x\in G(L)\;\middle|\; \#(\pi|_{\tilde\Sg})^{-1}(\{x\})>1\right\}
\]
and $\mu_q(V_*\setminus V_0)=0$. Let $B'=(\pi|_{\tilde\Sg})^{-1}(V_*\setminus V_0)\cup B$.
Because $(\pi|_{\tilde\Sg})^{-1}(\pi(B'))=B'$, from Lemma~\ref{lem:lm}
\[
\mu_q(\pi(B'))=\lm_q\bigl((\pi|_{\tilde\Sg})^{-1}(\pi(B'))\bigr)=\lm_q(B')=0
\]
and
\[
\mu_{\la \iota(x)\ra}(\pi(B'))=\lm_{\la x\ra}\bigl((\pi|_{\tilde\Sg})^{-1}(\pi(B'))\bigr)=\lm_{\la x\ra}(B')\ge\lm_{\la x\ra}(B)=1.
\]
Therefore, $\mu_{\la \iota(x)\ra}\perp \mu_q$.
We have now proved that $\mu_{\la h\ra}\perp \mu_q$ for all harmonic functions $h$.

Next, let $f$ be an arbitrary $m$-piecewise harmonic function. For $v\in \tilde W_m$, we apply the above result to the Dirichlet form $(\cE^{[v]},\cF^{[v]})$ on $L^2(G(L^{[v]}),\mu_q^{[v]})$ and $f^{[v]}:=f\circ \psi_v|_{G(L^{[v]})}$ to conclude that $\mu_{\la f^{[v]}\ra}^{[v]}\perp \mu_q^{[v]}$.
Take a $\sg$-compact subset $B_v$ of $G(L^{[v]})$ such that $\mu_{\la f^{[v]}\ra}^{[v]}(G(L^{[v]})\setminus B_v)=0$ and $\mu_q^{[v]}(B_v)=0$.
Let
\[
  B=\bigcup_{v\in \tilde W_m}\psi_v(B_v)
  \quad\text{and}\quad
  \hat B=B\setminus (V_*\setminus V_0).
\]
From Lemma~\ref{lem:selfsim} and the property $\mu_q(V_*\setminus V_0)=0$, we have 
\begin{align*}
\mu_{\la f\ra}(B)
&\ge \sum_{v\in \tilde W_m}\frac1{r_v}\mu_{\la f^{[v]}\ra}^{[v]}(B_v)
=\sum_{v\in \tilde W_m}\frac2{r_v}\cE^{[v]}(f^{[v]},f^{[v]})\\
&=2\cE(f,f)=\mu_{\la f\ra}(G(L))
\end{align*}
and
\[
\mu_q(B)=\mu_q(\hat B)\le\sum_{v\in \tilde W_m}q_v \mu_q^{[v]}(B_v)=0.
\]
Therefore, $\mu_{\la f\ra}\perp \mu_q$.

For $f\in\cF$ in general, we can take a sequence $\{f_n\}_{n=1}^\infty$ of piecewise harmonic functions that converges to $f$ in $\cF$ from Lemma~\ref{lem:dense}.
For each $n\in\N$, take a Borel set $B_n$ of $G(L)$ such that $\mu_q(B_n)=0$ and $\mu_{\la f_n\ra}(G(L)\setminus B_n)=0$. Let $B=\bigcup_{n=1}^\infty B_n$.
From a general inequality
\[
\left|\sqrt{\mu_{\la g\ra}(C)}-\sqrt{\mu_{\la g'\ra}(C)}\right|\le \sqrt{\mu_{\la g-g'\ra}(C)}
\]
for $g,g'\in\cF$ and a Borel set $C$ of $G(L)$ (see, e.g., \cite[p.\,111]{FOT}), we obtain 
\[
\mu_{\la f\ra}(G(L)\setminus B)=\lim_{n\to\infty}\mu_{\la f_n\ra}(G(L)\setminus B)=0,
\]
while $\mu_q(B)=0$. Therefore, $\mu_{\la f\ra}\perp \mu_q$.
\qed
\end{proof}

Lastly, we prove Theorem~\ref{th:RSG}.
\begin{proof}[of Theorem~\ref{th:RSG}]
Since the assertion obviously holds when $\#T=1$, we may assume that $\#T\ge2$.

Let $q=\{q_v\}_{v\in S}\in \cA$.
Take $l_0,l_1\in\N$ arbitrarily and let $l_2=\# T$. 
For $\hat\om\in\hat\Omega$, $\tilde W_n(\hat\om)$ $(n\in\Z_+)$ and $\mu_q^{(\hat\om)}$ denote the set $\tilde W_n$ and the measure $\mu_q$ associated with $L(\hat\om)$, respectively. We define a probability measure $\P$ on $(\Sg\times\hat\Om,\cB(\Sg)\otimes\hat\cB)$ by
\[
  \P(A)=\int_{\hat\Om}\mu_q^{(\hat\om)}(A_{\hat\om})\,\hat P(d\hat\om),\quad A\in \cB(\Sg)\otimes\hat\cB,
\]
where $\cB(\Sg)$ denotes the Borel $\sg$-field on $\Sg$ and $A_{\hat\om}=\{\om\in\Sg\mid (\om,\hat\om)\in A\}$.
More specifically, if $A$ is expressed as $A=\Sg_w\times B$ for $w=w_1w_2\cdots w_m\in W_m$ and $B=\{\hat\om\in\hat\Om\mid L_v(\hat\om)=\tau_v \text{ for all }v\in W_{\le n}\}$ for given $m,n\in\Z_+$ with $m-1\le n$ and $\{\tau_v\}_{v\in W_{\le n}}\in T^{W_{\le n}}$, then
\begin{align*}
\P(A)&=\int_B \mu_q^{(\hat\om)}(\Sg_w)\,\hat P(d\hat\om)
=\int_B q_{w}\bfone_{\tilde W_m(\hat\om)}(w)\,\hat P(d\hat\om)\\
&=\begin{cases}
\displaystyle q_{w}\prod_{v\in W_{\le n}}\rho(\{\tau_v\})&\text{if $w_{k}\in S^{(\tau_{[w]_{k-1}})}$ for all $k=1,2,\dots,m$,}\\   
0&\text{otherwise}.
\end{cases}
\end{align*}
For $k\in\Z_+$, let $\tilde U(k)$ denote the set of all elements $(w,\hat\om)\in W_k\times\hat\Omega$ such that the following hold:
\begin{enumerate}
\item[(i)] $w\in\tilde W_k(\hat\om)$; 
\item[(ii)] for any $i,j\in S_0$, if we take $\nu_{k+1},\nu_{k+2},\dots,\nu_{k+l_0+l_1+l_2}\in T$ such that
\[
w i^{\nu_{k+1}}i^{\nu_{k+2}}\cdots i^{\nu_{k+l_0}}j^{\nu_{k+l_0+1}}j^{\nu_{k+l_0+2}}\cdots j^{\nu_{k+l_0+l_1+l_2}}\in \tilde W_{k+l_0+l_1+l_2}(\hat\om),
\]
then $\{\nu_{k+l_0+l_1+1},\nu_{k+l_0+l_1+2},\dots,\nu_{k+l_0+l_1+l_2}\}=T$.
\end{enumerate}
Define
\begin{align*}
U(k)&=\bigl\{(\om,\hat\om)\in\Sg\times\hat\Omega\bigm| ([\om]_k,\hat\om)\in\tilde U(k)\bigr\}\\
\shortintertext{and}
U_{\hat\om}(k)&=\{\om\in\Sg\mid (\om,\hat\om)\in U(k)\},\quad \hat\om\in\hat\Om.
\end{align*}
Then,
\begin{align*}
\P(U(k))
&=\int_{\hat\Om}\sum_{w\in W_k}q_w\bfone_{\tilde U(k)}(w,\hat\om)\,\hat P(d\hat\om)\\
&=\sum_{w\in W_k}q_w \hat P\bigl(\{\hat\om\in\hat\Om\mid w\in \tilde W_k(\hat\om)\}\bigr)\Biggl(l_2!\prod_{\nu\in T}\rho(\{\nu\})\Biggr)^{\#(S_0\times S_0)}\\
&=p\sum_{\nu_1,\dots,\nu_k\in T}\sum_{\substack{w_j\in S^{(\nu_j)};\\j=1,\dots,k}}\prod_{m=1}^k q_{w_m}\prod_{m=1}^{k}\rho(\{\nu_m\})\\*
&\qquad\textstyle\bigl(p:=\bigl(l_2!\prod_{\nu\in T}\rho(\{\nu\})\bigr)^9\in(0,1)\bigr)\\
&=p\Biggl(\sum_{\nu\in T}\sum_{v\in S^{(\nu)}}q_{v}\rho(\{\nu\})\Biggr)^k\\
&=p.
\end{align*}
In a similar way, we can confirm that $\{U((l_0+l_1+l_2)n)\}_{n\in\Z_+}$ are independent with respect to $\P$. 

For $0\le M<N$, we define
\begin{align*}
F_{M,N}&=\bigcap_{n=M+1}^N\bigl((\Sg\times\hat\Om)\setminus U((l_0+l_1+l_2)n)\bigr),\\
F_{M,N,\hat\om}&=\{\om\in \Sg\mid (\om,\hat\om)\in F_{M,N}\},\quad
F_{M,\hat\om}=\bigcap_{N=M+1}^\infty F_{M,N,\hat\om}\quad (\hat\om\in\hat\Om),\\
G_{M,N}&=\bigl\{\hat \om\in\hat\Om\bigm| \mu_q^{(\hat\om)}(F_{M,N,\hat\om})\ge(1-p)^{N/2}\bigr\},
\quad G_M=\limsup_{N\to\infty}G_{M,N}.
\end{align*}
Then,
\begin{align*}
\hat P(G_{M,N})
&\le (1-p)^{-N/2}\int_{\hat\Om}\mu_q^{(\hat\om)}(F_{M,N,\hat\om})\,\hat P(d\hat\om)\\
&= (1-p)^{-N/2}\P(F_{M,N})\\
&=(1-p)^{-N/2}(1-p)^{N-M}\\
&=(1-p)^{(N/2)-M}.
\end{align*}
From the Borel--Cantelli lemma, $\hat P(G_M)=0$.
Let 
\[
\cU_q=\bigl\{q'=\{q'_v\}_{v\in S}\in\cA\bigm| q'_v/q_v<(1-p)^{-1/(4(l_0+l_1+l_2))}\text{ for all }v\in S\bigr\},
\]
which is an open neighborhood of $q$ in $\cA$.
By letting $\cF_n=\sg(\{\Sg_w\mid w\in W_{(l_0+l_1+l_2)n}\})$ for $n\in\Z_+$, we have 
\[
\frac{d\bigl(\mu_{q'}^{(\hat\om)}|_{\cF_n}\bigr)}{d\bigl(\mu_{q}^{(\hat\om)}|_{\cF_n}\bigr)}\le(1-p)^{-n/4}
\quad\mu_{q}^{(\hat\om)}\text{-a.e.}
\]
for all $q'\in \cU_q$ and $\hat\om\in\hat\Om$.
Suppose that $q'\in \cU_q$ and $\hat\om\notin G_M$.
For sufficiently large $N\in\N$, $\hat\om\notin G_{M,N}$.
Because $F_{M,N,\hat\om}$ belongs to $\cF_N$, we have
\[
\mu_{q'}^{(\hat\om)}(F_{M,N,\hat\om})
\le (1-p)^{-N/4}\mu_{q}^{(\hat\om)}(F_{M,N,\hat\om})
\le(1-p)^{N/4}
\]
for large $N$, which implies $\mu_{q'}^{(\hat\om)}(F_{M,\hat\om})=0$.
Let $G(q)$ denote $\bigcup_{M\in\Z_+}G_M$. (Here, we specify the dependency of $q$.)
This is a $\hat P$-null set. 
If $\hat\om\notin G(q)$, then\linebreak $\mu_{q'}^{(\hat\om)}\bigl(\bigcup_{M\in\Z_+}F_{M,\hat\om}\bigr)=0$ for $q'\in\cU_q$, which means that 
\[
\mu_{q'}^{(\hat\om)}\Bigl(\limsup_{n\to\infty}U_{\hat\om}((l_0+l_1+l_2)n)\Bigr)=1, \quad q'\in\cU_q.
\]
Because $\cA$ is $\sg$-compact, we can take a countable subset $\{q_\a\mid\a\in\N\}$ of $\cA$ such that $\bigcup_{\a\in\N}\cU_{q_\a}=\cA$.
Let $\cN=\bigcup_{\a\in\N}G(q_\a)$.
Then, $\hat P(\cN)=0$ and for $\hat\om\in\hat\Om\setminus\cN$, 
\[
  \mu_q^{(\hat\om)}\Bigl(\limsup_{k\to\infty}U_{\hat\om}(k)\Bigr)=1,
  \quad q\in\cA.
\]
This implies that, for $\hat\om\in\hat\Om\setminus\cN$, $(\star)$ holds with \eqref{eq:A2} replaced by \eqref{eq:A2'} for $l_2=\#T$.
\qed
\end{proof}
\section{Concluding remarks}
We make some remarks about the main results.
\begin{enumerate}
\item The arguments in this paper are valid for some other inhomogeneous fractals. For example, we can obtain similar results for higher-dimensional inhomogeneous Sierpinski gaskets. A crucial property required here is that the eigenfunctions of $A_i^{(\nu)}$ ($i\in S_0$) associated with the eigenvalues $r^{(\nu)}$ do not depend on $\nu$.
\item Since Condition~(B) in Theorem~\ref{th:main} is a rather technical constraint, we focus on arguments that are valid more generally and we do not try to make the assumption as weak as possible by relying on concrete structures of fractals under consideration. Indeed, in Lemma~\ref{lem:C}(3), the part ``there exists some $i'\in S_0$'' can be strengthened to ``any $i'\in S_0\setminus\{i\}$.'' As a result, in Condition~($\star$), the part ``for every $i,j\in S_0$'' can be weakened to ``for every $i\in S_0$, for $j=i$ and for some other $j\in S_0$.''
\item We reason that Theorem~\ref{th:main} holds true without assuming Condition~(A) or (B) in practice.
\end{enumerate}

\end{document}